\let\savedegree\degree
\let\degree\relax
\let\degree\savedegree
\newcommand\norm[1]{\left\lVert#1\right\rVert}
\newcommand\normc[1]{\left\lVert#1\right\rVert_\mathbb{C}}
\DeclareMathOperator{\modulus}{mod}
\DeclarePairedDelimiter{\abs}{\lvert}{\rvert}
\theoremstyle{plain}
\newtheorem{thm}{Theorem}[section]
\newtheorem{lemma}[thm]{Lemma}
\newtheorem{prop}[thm]{Proposition}
\theoremstyle{definition}
\newtheorem{remark}[thm]{Remark}
\newtheorem{defn}[thm]{Definition}
\newtheorem{example}[thm]{Example}
\newtheorem*{condition}{Mass Condition}
\newenvironment{proofoftheorem1.1}{%
  \proof}{\endproof}
\begin{document}

\title{A partial characterization of cosine Thurston maps}

\author{Schinella D'Souza}
\address{Department of Mathematics, University of Michigan, Ann Arbor, Michigan 48109}
\email{dsouzas@umich.edu}

\thanks{The author was supported in part by NSERC}

\begin{abstract}
In this paper, we introduce cosine Thurston maps. In particular, we construct postsingularly finite topological cosine maps and focus on such maps with strictly preperiodic critical points. We use the techniques of Hubbard, Schleicher, and Shishikura to prove that, subject to a condition on the critical points,  a postsingularly finite topological cosine map with strictly preperiodic critical points is combinatorially equivalent to $C_\lambda(z) = \lambda\cos z$ for a unique $\lambda \in \mathbb{C}^*$ if only if it has no degenerate Levy cycle. 
\end{abstract}

\maketitle

\section{Introduction}
\label{introduction_section}

William Thurston's topological characterization of rational functions (\cite{douady_hubbard_thurston}) is a cornerstone theorem in rational dynamics. The framework Thurston built between topology and geometry in dynamics involves \textit{postcritically finite} maps (maps where all critical points have orbits that are periodic or preperiodic). 

A natural question that has since arisen is how to extend the theory Thurston built to transcendental maps, where the analog of postcritically finite maps are \textit{postsingularly finite} maps. The characterization does not go through automatically because it is crucial that the maps in question have finite degree. Furthermore, entire transcendental functions have an essential singularity at $\infty$. However, a major breakthrough in this direction was due to Hubbard, Schleicher, and Shishikura (\cite{expthursmaps}). In their work, the authors expand Thurston theory to the exponential family $z \mapsto \lambda e^z$ and prove a topological characterization of exponential maps. Their results are based on understanding the geometry of quadratic differentials on the Riemann sphere $\hat{\mathbb{C}}$ and then applying it to exponential maps. Their work on controlling the geometry of quadratic differentials leaves room for application to other maps as well.

One can then ask the same question about cosine maps, which are understood less well than exponential maps. In the literature, the cosine family often looked at is the two-parameter family $C_{a,b}: z \mapsto ae^z + be^{-z}$ for $a, b \in \mathbb{C}^*$. A classification of the escaping points of this family was provided by \cite{schleicher_escaping_pts_cosine}. Furthermore, Schleicher studied the dynamics of $C_{a,b}$ in \cite{schleicher_fine_structure_cosine} in the case where the critical orbits are strictly preperiodic. We will be interested in this case as well, but for not exactly the same cosine family. From a topological point of view in the dynamical plane, \cite{leticia_cosine} has given topological models for subclasses of $C_{a,b}$. As for one-parameter families, the core entropy of the cosine family $z \mapsto \frac{\lambda}{2}(e^z+e^{-z})$ has been proven to be bounded dependent on combinatorial conditions in \cite{chernov_core_entropy_cosine}.

In this paper, we consider the same question of \cite{expthursmaps} but for the one-parameter cosine family $C_\lambda(z) = \lambda \cos(z)$ (where $\lambda \in \mathbb{C}^*$), thought of on $\mathbb{C}$ due to the essential singularity at $\infty$. We focus on the case where all critical points have orbits that are strictly preperiodic. There are many merits to this family that make it particularly amenable to adaptations of the results of Hubbard, Schleicher, and Shishikura. Their tools have also been used in \cite{shemyakov} to study certain postsingularly finite entire functions (such as \textit{structurally finite maps}). In the cosine family $C_\lambda$, the obstacle is the presence of infinitely many critical points and therefore, we impose an additional constraint on the position of critical points relative to points in the postsingular set. We also provide an example that motivates this constraint. 

\subsection{The one-parameter cosine family}

Let $\lambda \in \mathbb{C}^*$. The holomorphic cosine map we work with is defined by
\begin{align*}
    C_\lambda: \mathbb{C} &\rightarrow \mathbb{C} \\
    z &\mapsto \lambda\cos z . 
\end{align*}

There are infinitely many critical points of $C_\lambda$ and they are exactly at $k\pi$ for $k \in \mathbb{Z}$. They are completely independent of the critical values, $\lambda$ and $-\lambda$. Each critical point of $C_\lambda$ has the unique property that it maps to its critical value with local degree 2. Because $C_\lambda$ is an even map, $\lambda$ and $-\lambda$ have the same image and therefore all the critical orbits merge. 

\begin{center}
    \begin{tikzpicture}[scale=1.5]
        \draw (-1,0.25) node{$\lambda$};
        \draw (-1,-0.25) node{$-\lambda$};
        \draw (0,0) node{$\bullet$};
        \draw (1,0) node{$\bullet$};
        \draw (2.15,0) node{$\cdots$};
        \draw[->] (-0.8,0.25) to (-0.1,0.1);
        \draw[->] (-0.75,-0.25) to (-0.1,-0.1);
        \draw[->] (0.15,0) to (0.85,0);
        \draw[->] (1.15,0) to (1.85,0);
    \end{tikzpicture} 
\end{center}

\subsection{Adaptation of exponential results to cosine}

This work applies techniques of \cite{expthursmaps} to build a partial characterization of postsingularly finite topological cosine maps (see Definition \ref{topcosmap}) through the lens of Thurston theory. Due to the parallel of this paper with the work of Hubbard, Schleicher, and Shishikura, we use their notation so that the reader may move between both papers easily and use them to expand this work to further transcendental functions.

\subsubsection{Main result} We adapt key notions of Thurston theory including postcritically finite topological maps, combinatorial equivalence, and degenerate Levy cycles (see Section \ref{thurston_theory_section}). We build a topological cosine map (see Definition \ref{topcosmap}) that is postsingularly finite (see Definition \ref{topcospsf}). The notion of postsingularly finite and postcritically finite are the same on $\mathbb{C}$ for cosine because, like a rational map, it has only critical points and critical values (and no asymptotic values). By adapting the techniques of \cite{expthursmaps}, we are able to recover the following main result of our work. 

\begin{thm}[Partial characterization of cosine maps - preperiodic case]\label{main_thm}
    Let $f$ be a postsingularly finite topological cosine map with strictly preperiodic critical points and suppose the \hyperref[condition:mass_condition]{mass condition} holds. Then $f$ is combinatorially equivalent to a unique postsingularly finite holomorphic cosine map if and only if it does not admit a degenerate Levy cycle. 
\end{thm}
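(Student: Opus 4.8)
The plan is to follow the Thurston-theoretic strategy of Hubbard, Schleicher, and Shishikura, adapted to the cosine setting. The statement is an "if and only if," so I would split it into two implications. The forward direction (combinatorial equivalence to a holomorphic cosine map implies no degenerate Levy cycle) should be the easier half: a holomorphic cosine map $C_\lambda$ is an honest analytic map with no Thurston obstruction, and a degenerate Levy cycle is an obstruction that is preserved under combinatorial equivalence, so its existence would contradict the realizability by $C_\lambda$. I would make this precise by showing that a degenerate Levy cycle forces a multicurve whose associated Thurston linear map has leading eigenvalue at least $1$, which cannot occur for a map combinatorially equivalent to a genuine holomorphic model.

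\smallskip

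The substantial direction is the converse: assuming no degenerate Levy cycle (and the mass condition), produce a holomorphic cosine map combinatorially equivalent to $f$, and show it is unique. The engine here is the Thurston iteration on an appropriate Teichm\"uller space of the sphere (or the plane punctured along the postsingular set, completed at $\infty$). First I would set up the pullback map $\sigma_f$ on this Teichm\"uller space induced by $f$, exactly as in the exponential case. The fixed points of $\sigma_f$ correspond to holomorphic cosine maps combinatorially equivalent to $f$, so the goal becomes proving $\sigma_f$ has a fixed point. The standard approach is to show that the iterates of $\sigma_f$ from an arbitrary starting point either converge (giving a fixed point, hence a realization) or escape to infinity in Teichm\"uller space (indicating an obstruction). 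The heart of the argument is controlling the geometry under iteration: one tracks the lengths of short geodesics and the associated quadratic differentials, and uses the contraction properties of $\sigma_f$ to rule out divergence unless a Levy cycle appears.

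\smallskip

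The main obstacle I expect is precisely the control of the quadratic-differential geometry that allows one to convert "divergence of the Thurston iteration" into "existence of a degenerate Levy cycle." In the rational case one has finite degree and a compact picture; here the infinitely many critical points and the essential singularity at $\infty$ break the standard compactness arguments, which is exactly why the mass condition is imposed. I would therefore need to adapt the Hubbard–Schleicher–Shishikura estimates on integrable meromorphic quadratic differentials: show that the mass condition guarantees that no mass escapes to $\infty$ or concentrates on the infinitely many critical points in a way that would spoil the dichotomy, so that if the iteration diverges, a short-geodesic multicurve must stabilize into a Levy cycle that is moreover degenerate because of the $2$-to-$1$ critical behavior of cosine. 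Establishing this no-escape-of-mass control, and carefully matching the degenerate Levy cycle condition to the cosine map's local degree-$2$ structure at critical points, is where the real work lies.

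\smallskip

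For uniqueness, I would argue as in the classical theory that two holomorphic cosine maps combinatorially equivalent to $f$ are combinatorially equivalent to each other, and then invoke a rigidity statement (the cosine analog of Thurston rigidity for postcritically finite maps without Thurston obstructions) to conclude they are conformally conjugate, which in the one-parameter family $C_\lambda$ forces equality of the parameter $\lambda \in \mathbb{C}^*$. The rigidity step again relies on the absence of Thurston obstructions and on the contraction of $\sigma_f$ near its fixed point.
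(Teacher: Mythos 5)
Your overall architecture matches the paper's: the paper's proof of Theorem \ref{main_thm} is exactly the reduction you describe --- set up $\sigma_f:\mathcal{T}_f\to\mathcal{T}_f$, identify fixed points with realizations (Theorem \ref{fpiffthursequiv}), identify the coderivative with the cosine push-forward on $Q^1$, and then run the dichotomy ``either the iteration converges to the unique fixed point, or the derivative norm stays near $1$ and Proposition \ref{contraction_or_levy_cycle} extracts a degenerate Levy cycle,'' with the mass condition entering through Proposition \ref{key_prop} and uniqueness coming from strict contraction ($\norm{d\sigma}<1$). Two points deserve correction, though. First, your forward direction as stated would fail: you propose to show that a degenerate Levy cycle forces a multicurve whose Thurston linear transformation has leading eigenvalue at least $1$, but for an infinite-degree map such as a topological cosine map the Thurston linear transformation is not defined (a curve has infinitely many preimage components, and the usual sum over components and degrees does not converge to a finite matrix), and there is no Thurston-obstruction eigenvalue criterion in this setting. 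The argument here must be direct: the curves of a degenerate Levy cycle bound disks on which $f$ restricts to homeomorphisms, and one contradicts realizability by a holomorphic $C_\lambda$ using the hyperbolic geometry of $\hat{\mathbb{C}}\setminus P_{C_\lambda}$ (equivalently, by showing the Thurston iteration cannot converge in the presence of such a cycle), as in the exponential case.

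Second, you slightly misattribute the role of the mass condition: it is not a ``no mass escapes to $\infty$'' statement (that control comes from the thick/thin limit-model decomposition and the Decomposition of Mass Theorem, which need no extra hypothesis). The mass condition is specifically the requirement that the forward images $C_{\lambda_s}\circ\cdots\circ C_{\lambda_1}(D)$ of the small disk $D$ where the mass of an efficient quadratic differential concentrates avoid the critical points $k\pi$, so that the composition is injective on $D$ and the annulus $D\setminus\tilde D$ survives pushing forward; Example \ref{example_eff_qd_with_cp} shows why this cannot simply be proved (cos-symmetric poles near a critical point can apparently push forward efficiently because cosine is even). Your phrase ``concentrates on the critical points in a way that would spoil the dichotomy'' is the right intuition, but the condition is an assumption imposed on the disks produced by Proposition \ref{key_prop}, not a consequence one derives.
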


We note that the general proof of this main result is similar to the characterization of exponential maps in \cite{expthursmaps}. However, there are several propositions that go into the proof of the characterization of exponential maps that rely on the explicit properties of the exponential. These are the propositions that we adapt to the cosine setting. Furthermore, we provide two examples of candidate efficient sequences of quadratic differentials (in the exponential or cosine sense). The second example is motivation for the \hyperref[condition:mass_condition]{mass condition} in Theorem \ref{main_thm}, a condition that we impose to ensure we have enough control over the geometry of quadratic differentials on $\hat{\mathbb{C}}$.

\subsection*{Outline} Sections \ref{thurston_theory_section} and \ref{teich_quad_spaces_section} provides the background definitions necessary to understand Theorem \ref{main_thm}. In Section \ref{thurston_theory_section}, we give Teichm\"uller space background and construct an analytic map from a specific Teichm\"uller space to itself (the Thurston pullback map). In Section \ref{teich_quad_spaces_section}, we give background on quadratic differentials, their relationship to Teichm\"uller space, and specify the \hyperref[condition:mass_condition]{mass condition}. We also provide brief proof sketches of the results of \cite{expthursmaps} that generalize immediately to cosine (including Theorem \ref{main_thm}), mainly highlighting key differences as well as situations when the \hyperref[condition:mass_condition]{mass condition} need not be used. In Section \ref{cos_eff_quad_diff_section}, the notion of cosine efficiency for a sequence of quadratic differentials is defined and two examples of such candidate sequences is given. In Section \ref{adaptations_to_cosine_section}, we adapt the exponential-dependent results of \cite{expthursmaps} to the cosine setting. Lastly, in Section \ref{generalizing_results_further_section}, we explain what kinds of properties would be needed to generalize the results of Hubbard, Schleicher, and Shishikura to further transcendental maps and discuss weakening the imposed \hyperref[condition:mass_condition]{mass condition}.

\subsection*{Acknowledgements}
The author would like to express her immense gratitude to John Hubbard for suggesting this work and for numerous vital discussions regarding the techniques of the characterization of exponential maps. The author is very thankful to Sarah Koch for her constant support, and especially for providing many creative and perceptive comments. The author also thanks Max Lahn and Malavika Mukundan for several valuable discussions. This work was supported in part by NSERC.

\section{Thurston Theory for Cosine Maps}
\label{thurston_theory_section}

In this section, we will set up analogs of Thurston mappings and the Thurston pullback map for cosine. We will work with the topological sphere $S^2$ and the Riemann sphere $\hat{\mathbb{C}}$, with finitely many punctured points. We will assume all maps are orientation-preserving and there is a point $\infty$ on $S^2$ (by fixing an identification of $S^2$ with $\hat{\mathbb{C}}$ so that the north pole of $S^2$ is identified with $\infty \in \hat{\mathbb{C}}$).

\begin{defn}\label{topcosmap}
    A \textit{topological cosine map} is a ramified covering map $f:S^2 \setminus \{ \infty\} \rightarrow S^2 \setminus \{\infty\}$ that has two critical values, denoted $x_1$ and $y_1$, that are mapped together under $f$, along with an embedding of $\mathbb{Z}$ into $S^2 \setminus \{\infty \}$ such that if $b \in S^2 \setminus \{\infty, x_1, y_1\}$ and $\pi_1(S^2 \setminus \{\infty, x_1, y_1\}, b) := \langle \alpha, \beta \rangle$, then for every $n \in \mathbb{Z}$
    \begin{enumerate}[(1)]
        \item $f^{-1}(\alpha)$ consists of paths that connect $2n$ to $2n+1$ twice and
        \item $f^{-1}(\beta)$ consists of paths that connect $2n+1$ to $2n+2$ twice.
    \end{enumerate}
\end{defn}

\tikzset{->-/.style={decoration={
  markings,
  mark=at position .5 with {\arrow{>}}},postaction={decorate}}}

\begin{figure}[!b]
\centering
\begin{tikzpicture}[ scale = 1.7 , every loop/.style={}]

    \draw[thick, ->] (0,-0.5) -- (0,-1.5) node[midway, right]{$f$};

    \draw[thick, ->-, red] (-1,0) to [out=90, in=90] (-2,0);
    \draw[thick, ->-, red] (-2,0) to [out=-90, in=-90] (-1,0);
    \draw[thick, ->-, blue] (0,0) to [out=90, in=90] (-1,0);
    \draw[thick, ->-, blue] (-1,0) to [out=-90, in=-90] (0,0);
    \draw[thick, ->-, red] (1,0) to [out=90, in=90] (0,0);
    \draw[thick, ->-, red] (0,0) to [out=-90, in=-90] (1,0);
    \draw[thick, ->-, blue] (2,0) to [out=90, in=90] (1,0);
    \draw[thick, ->-, blue] (1,0) to [out=-90, in=-90] (2,0);
    \draw[thick, ->-, red] (2.5,0.3) to [out=170, in=90] (2,0);
    \draw[thick, ->-, red] (2,0) to [out=-90, in=-170] (2.5,-0.3);
    \draw[thick, ->-, blue] (-2,0) to [out=90, in=10] (-2.5,0.3);
    \draw[thick, ->-, blue] (-2.5,-0.3) to [out=-10, in=-90] (-2,0);

    \draw (-2,0) node{$\bullet$} node[right]{$2n-2$};
    \draw (-1,0) node{$\bullet$} node[right]{$2n-1$};
    \draw (0,0) node{$\bullet$} node[right]{$2n$};
    \draw (1,0) node{$\bullet$} node[right]{$2n+1$};
    \draw (2,0) node{$\bullet$} node[right]{$2n+2$};
    \draw (3,0) node{$\cdots$};
    \draw (-2.8,0) node{$\cdots$};

    \draw[scale=3, thick, ->-, red] (0,-0.7) to [out=-20, in=60, loop] (0,-1.2);
    \draw[scale=3] (0.35,-0.6) node[red]{$\alpha$};
    \draw[scale=3, thick, ->-, blue] (0,-0.7) to [out=160, in=240, loop] (0,-0.7);
    \draw[scale=3] (-0.35,-0.8) node[blue]{$\beta$};
    \draw[scale=3] (0,-0.7) node{$\bullet$} node[right,below]{$b$};
    \draw[scale=3] (0.1,-0.65) node{$\bullet$} node[right]{$x_1$};
    \draw[scale=3] (-0.2,-0.75) node{$\bullet$} node[right]{$y_1$};
\end{tikzpicture}
\caption{A topological cosine map $f:S^2 \setminus \{\infty\} \rightarrow S^2 \setminus \{\infty\}$.}
\label{top_cos_map_fig}
\end{figure}
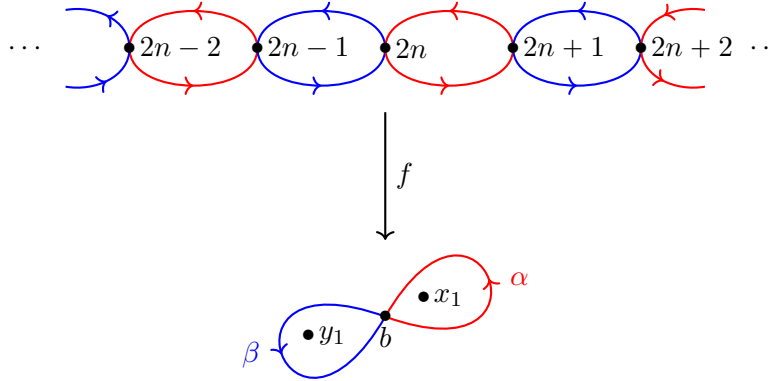

Figure \ref{top_cos_map_fig} illustrates how one can picture a topological cosine  map. Observe that $f$ is an infinite-degree cover with every critical point mapping forward with local degree 2. This general idea is similar in spirit to recent work of Mukundan, Prochorov, and Reinke with admissible quadruples (\cite{mukundan_prochorov_reinke}).

\begin{defn}\label{topcospsf}
    Given a topological cosine map $f$, the \textit{postsingular set of $f$} is defined as 
    \[
    P_f := \bigcup_{n\geq 0}f^{\circ n}(x_1) \cup \{y_1, \infty\}.
    \]
    where $x_1$ and $y_1$ are the two critical values of $f$. A topological cosine map is called \textit{postsingularly finite} if $\left|P_f \right| < \infty$.
\end{defn}

\begin{defn}\label{topcosthurstonequiv}
    Let $f, g: S^2 \setminus \{\infty\} \rightarrow S^2 \setminus \{ \infty \}$ be topological cosine maps. We say that $f$ and $g$ are \textit{combinatorially equivalent} if there exists homeomorphisms $\varphi, \varphi': S^2 \rightarrow S^2$ such that $\varphi|_{P_f} = \varphi'|_{P_f}$, $P_g = \varphi(P_f) = \varphi'(P_f)$, $\varphi(\infty) = \varphi'(\infty) = \infty$, $\varphi$ and $\varphi'$ are isotopic rel $P_f$, and the diagram
    \begin{center}
    \begin{tikzcd}
    S^2 \setminus\{\infty \} \arrow[d, "f"'] \arrow[r, "\varphi'"] & S^2 \setminus\{\infty \} \arrow[d, "g"] \\
    S^2 \setminus\{\infty \} \arrow[r, "\varphi"]                  & S^2 \setminus\{\infty \}               
    \end{tikzcd}
    \end{center}
    commutes.
\end{defn}

\begin{defn}\label{essential_def}
    Given a finite set $A \subset S^2$, a simple closed curve $\gamma \subset S^2 \setminus A$ is said to be \textit{essential} if each component of $S^2 \setminus \gamma$ contains at least two points of $A$.
\end{defn}

\begin{defn}\label{Levy_cycle}
    A topological cosine map $f$ has a \textit{Levy cycle} if there exists a circularly ordered set of disjoint, simple closed curves $\Gamma = \{\gamma_0, \ldots, \gamma_{k-1}, \gamma_k = \gamma_0 \}$ on $S^2 \setminus P_f$ with the following property: for each $\gamma_i \in \Gamma$, $\gamma_i$ is essential, there exists a component $\gamma_{i-1}' \in f^{-1}(\gamma_i)$ that is homotopic to $\gamma_{i-1}$ rel $P_f$, and $f$ is a homeomorphism between $\gamma_{i-1}'$ and $\gamma_i$. If $D_{\gamma_i}$ denotes the topological disk $\gamma_i$ bounds not containing $\infty$, then if $f$ is a homeomorphism between $\overline{D}_{\gamma_{i-1}}$ and $\overline{D}_{\gamma_{i}}$ for $0 \leq i \leq k-1$, then $\Gamma$ is called a \textit{degenerate Levy cycle}.
\end{defn}

In Definition \ref{Levy_cycle}, components that do not contain $\infty$ in $S^2$ will be called \textit{bounded}.

\subsection{Teichm\"uller space background and notation}

Let $f$ be a postsingularly finite topological cosine map. The following definition is equivalent to the standard Teichm\"uller space definition (such as in \cite{hubbardvol1}). This definition will serve more useful for our purposes. 

\begin{defn}
    Let $\varphi_1, \varphi_2: S^2 \rightarrow \hat{\mathbb{C}}$ be homeomorphisms. Then, an element of the \textit{Teichm\"uller space modeled on $(S^2, P_f)$}, denoted $\mathcal{T}_f$, is defined by an equivalence class $[\varphi_1]$ such that we have $\varphi_1(x_1) = -\varphi_1(y_1)$ and $\varphi_1(\infty) = \infty$ with $\varphi_2 \in [\varphi_1]$ if $\varphi_1|_{P_f} = \varphi_2|_{P_f}$ and $\varphi_1$ is isotopic to $\varphi_2$ relative to $P_f$.
\end{defn}

If $|P_f| < \infty$, then write $P_f = \{x_1, y_1, x_2, x_3, \ldots, x_k, \infty \}$, where $x_{i+1} = f^{\circ i}(x_1)$ for $1 \leq i \leq k-1$. Then, $\mathcal{T}_f$ is finite-dimensional of dimension $|P_f| - 3$ and our results assume $|P_f| > 3$. For $f$ with strictly preperiodic critical points, the preperiod refers to the preperiod of the orbit of any critical point of $f$ (it will be the same regardless of the critical point chosen).

\begin{example}
    Suppose $f$ is a postsingularly finite topological cosine map $f$ with $|P_f| = 4$ such that $f$ has preperiod 2 and period 1. Write $P_f = \{x_1, y_1, x_2, \infty \}$ and note the dynamical system looks like
    \begin{center}
        \begin{tikzpicture}[scale=1.5]
            \draw (-1,0.25) node{$x_1$};
            \draw (-1,-0.25) node{$y_1$};
            \draw (0.1,0) node{$x_2$};
            \draw[->] (-0.8,0.25) to (-0.1,0.1);
            \draw[->] (-0.75,-0.25) to (-0.1,-0.1);
            \draw[->] (0.25,0.1) to [out=40,in=320,looseness=6] (0.25,-0.1);

            \draw (2,0) node{$\infty$};
            \draw[->] (2.15,0.1) to [out=40,in=320,looseness=6] (2.15,-0.1);
        \end{tikzpicture} 
    \end{center}
\end{example}

\subsection{Construction of Thurston pullback map for cosine}

Having set up the topological map associated to cosine, our next goal will be to define an analog of the Thurston pullback map for it that maps $\mathcal{T}_f$ to itself. 

Let $[\varphi] \in \mathcal{T}_f$. Suppose $\varphi(x_1) \neq 0$, $\lambda := \varphi(x_1)$, and $\varphi(y_1) = -\lambda$. Then, $\varphi$ is unique up to scaling. Choose a basepoint $b \in S^2 \setminus \{x_1,y_1, \infty \}$. From Definition \ref{topcosmap}, we can assume, without loss of generality, that $\alpha$ is the generator of $\pi_1(S^2 \setminus \{\infty\},b)$ that is a loop around $x_1$. Denote $\pi_1(\mathbb{C}, 0) := \langle \gamma, \delta \rangle$ and assume, without loss of generality, that $\gamma$ is a loop around $\lambda$. For a loop $\eta$, we will denote $D_\eta$ to be the topological disk consisting of the loop $\eta$ along with its bounded component. 
    
Now $f^{-1}(D_{\alpha^2})$ consists of infinitely many disjoint topological disks. The restriction of $\varphi \circ f$ to any one of these topological disks is a double cover from that topological disk to an open bounded subset of $\mathbb{C}$ ramified at $\lambda$ which, in this case, is $D_\gamma$. Similarly, $C_\lambda^{-1}(D_{\gamma^2})$ also consists of infinitely many disjoint topological disks, each of which contains exactly one critical point of $C_\lambda$. The restriction of $C_\lambda$ to any one of these topological disks is a double cover from that topological disk to $D_\gamma$, ramified at $\lambda$.

To begin the construction, choose a disk $D \in f^{-1}(D_{\alpha^2}) \subset S^2 \setminus \{\infty \}$ and $D' \in C_\lambda^{-1}(D_{\gamma^2}) \subset \mathbb{C}$. Then, $\varphi \circ f|_D:D \rightarrow D_{\gamma}$ and $C_\lambda|_{D'}: D' \rightarrow D_\gamma$ are both double covers ramified at a single point. There exists only one such double cover up to sign so there exists a lift $\varphi':D \rightarrow D'$ that is a homeomorphism onto its image. It is unique up to translation by $2\pi$ and sign because $C_\lambda(z) = C_\lambda(z \pm 2n\pi)$. See Figure \ref{Thurston_pullback_map_fig} for a pictorial description of this initial step. 
    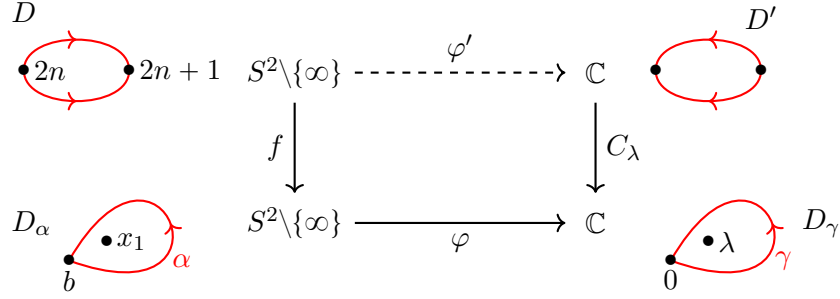
\begin{figure}
    \centering
    \begin{tikzpicture}[ scale = 2 , every loop/.style={}]
        \draw (-1,1) node{$S^2 \setminus \{\infty \}$};
        \draw (1,1) node{$\mathbb{C}$};
        \draw (-1,0) node{$S^2 \setminus \{\infty \}$};
        \draw (1,0) node{$\mathbb{C}$};
        \draw[thick, ->] (-1,0.8) -- (-1,0.2) node[midway, left]{$f$};
        \draw[thick, ->] (1,0.8) -- (1,0.2) node[midway, right]{$C_\lambda$};
        \draw[thick, ->] (-0.6,0) -- (0.8,0) node[below, midway]{$\varphi$};
        \draw[thick, dashed, ->] (-0.6,1) -- (0.8,1) node[above, midway]{$\varphi'$};
    
        \draw[scale=0.7, thick, ->-, red] (-4,1.45) to [out=90, in=90] (-3,1.45);
        \draw[scale=0.7,thick, ->-, red] (-4,1.45) to [out=-90, in=-90] (-3,1.45);
        \draw[scale=0.7] (-4,1.45) node{$\bullet$} node[right]{$2n$};
        \draw[scale=0.7] (-3,1.45) node{$\bullet$} node[right]{$2n+1$};
        \draw[scale=0.7] (-4,2) node{$D$};
    
        \draw[scale=2.5, thick, ->-, red] (-1,-0.1) to [out=-20, in=60, loop] (-1,-1.2);
        \draw[scale=2.5] (-0.7,-0.1) node[red]{$\alpha$};
        \draw[scale=2.5] (-1,-0.1) node{$\bullet$} node[right,below]{$b$};
        \draw[scale=2.5] (-0.9,-0.05) node{$\bullet$} node[right]{$x_1$};
        \draw[scale=2.5] (-1.1,0) node{$D_\alpha$};
    
        \draw[scale=0.7, thick, ->-, red] (3,1.45) to [out=90, in=90] (2,1.45);
        \draw[scale=0.7,thick, ->-, red] (3,1.45) to [out=-90, in=-90] (2,1.45);
        \draw[scale=0.7] (3,1.45) node{$\bullet$};
        \draw[scale=0.7] (2,1.45) node{$\bullet$};
        \draw[scale=0.7] (3,1.95) node{$D'$};
    
        \draw[scale=2.5, thick, ->-, red] (0.6,-0.1) to [out=-20, in=60, loop] (0.75,-1.2);
        \draw[scale=2.5] (0.9,-0.1) node[red]{$\gamma$};
        \draw[scale=2.5] (0.6,-0.1) node{$\bullet$} node[right,below]{$0$};
        \draw[scale=2.5] (0.7,-0.05) node{$\bullet$} node[right]{$\lambda$};
        \draw[scale=2.5] (1,0) node{$D_\gamma$};
        
    \end{tikzpicture}
    \caption{Initial step in the construction of the Thurston pullback map for $C_\lambda(z) = \lambda \cos z$.}
    \label{Thurston_pullback_map_fig}
    \end{figure}

Next, note that $D$ intersects two topological disks in $f^{-1}(D_{\beta^2})$ at two points labeled by integers, according to the definition of a topological cosine map (Definition \ref{topcosmap}). Without loss of generality, choose the topological disk that contains the point labeled by the larger of these two integers, say $2n+1$, and call it $\widetilde{D}$. Similarly, choose the topological disk $\widetilde{D'} \in C_\lambda^{-1}(D_{\delta^2})$ that intersects $D'$ and contains the larger critical point of $C_\lambda$, say $m\pi$. As above, there exists a lift $\varphi'': \widetilde{D} \rightarrow \widetilde{D'}$ that is a homeomorphism onto its image such that $\varphi''(2n+1)=m\pi - \frac{\pi}{2}$. Therefore, we have obtained a homeomorphism from $D \cup \widetilde{D}$ to $D' \cup \widetilde{D'}$. For ease of notation, we call this homeomorphism $\varphi': D \cup \widetilde{D} \to D' \cup \widetilde{D'}$. Continue to repeat this procedure for every disk in $f^{-1}(D_{\alpha^2}) \cup f^{-1}(D_{\beta^2})$ to obtain a homeomorphism which we denote, for ease of notation, $\varphi': f^{-1}(D_\alpha \cup D_\beta) \rightarrow C_\lambda^{-1}(D_\gamma \cup D_\delta)$. 

At this point, we have the two remaining unbounded connected components of the complement of $f^{-1}(D_\alpha \cup D_\beta)$ to deal with in order to extend $\varphi'$ to a homeomorphism from $S^2 \setminus \{\infty \}$ to $\mathbb{C}$. For each unbounded component, $\varphi \circ f$ is a universal cover from the unbounded component to $\mathbb{C} \setminus (D_\gamma \cup D_\delta)$ and the same is true for $C_\lambda$ restricted to each unbounded component of $C_\lambda^{-1}(D_\gamma \cup D_\delta)$. Therefore, $\varphi'$ can be extended to a homemorphism $\varphi': S^2 \setminus \{\infty\} \rightarrow \mathbb{C}$ and in particular, it extends so that $\varphi'(\infty) = \infty$. By construction, the diagram   
\begin{center}
    \begin{tikzcd}
        S^2 \setminus\{\infty \} \arrow[d, "f"'] \arrow[r, "\varphi'"] & \mathbb{C} \arrow[d, "C_\lambda"] \\
        S^2 \setminus\{\infty \} \arrow[r, "\varphi"]                  & \mathbb{C}           
    \end{tikzcd}
\end{center}
commutes. Consider $[\varphi],  [\varphi'] \in \mathcal{T}_f$ and note that if $\psi$ is another representative of $[\varphi]$, then the resulting map $\psi'$ under the above construction will be in $[\psi]$ by the same logic as [\cite{epstein-remple-gillen}, Proposition 2.3]. Thus, the map defined by $\sigma_f: \mathcal{T}_f \rightarrow \mathcal{T}_f$ defined by $\sigma_f([\varphi]) = [\varphi']$ is well-defined and $C_\lambda$ is unique up to conjugation by affine maps. It is called the \textit{Thurston pullback map of $f$}.

A more general construction of the Thurston pullback is done for entire maps in \cite{mukundan_prochorov_reinke}. In contrast to this construction and the general rational construction (see [\cite{hubbardvol2}, Definition 10.6.1]), our definition of the Thurston pullback map for a topological cosine map relies on using the map $C_\lambda$ to provide the homeomorphism $\varphi'$ that completes the diagram in Figure \ref{Thurston_pullback_map_fig}. We know from the start that $C_\lambda$ is analytic and need only ensure $\varphi'$ has the required properties. 

We also note that the Thurston pullback map for $f$ is analytic (by similar reasoning as in [\cite{buff_cui_tan}, Proposition 1.4]). Thus, it is contracting with respect to the \textit{Teichm\"uller metric}, given by 
\[
d([\varphi_1], [\varphi_2]) = \inf_{\psi} \log K(\psi)
\]
where $[\varphi_1], [\varphi_2] \in \mathcal{T}_f$, the infimum is taken over all quasiconformal homeomorphisms $\psi$ such that $\varphi_2 = \psi \circ \varphi_1$ on $P_f$ and $\varphi_2$ and $\psi \circ \varphi_1$ are homotopic rel $P_f$, and $K(\psi)$ is the quasiconformal constant of $\psi$. See [\cite{hubbardvol1}, Proposition and Definition 6.4.4] for more details. Note that this metric makes $\mathcal{T}_f$ into a complete metric space. 

\begin{remark}
    We will denote the Thurston pullback map by both $\sigma_f$ and by $\sigma$. 
\end{remark}

\begin{thm}[Relationship between cosine and fixed point of $\sigma_f$]\label{fpiffthursequiv}
    A postsingularly finite topological cosine map $f$ is combinatorially equivalent to a postsingularly finite holomorphic cosine map $C_\lambda$ if and only if $\sigma_f$ has a fixed point.
\end{thm}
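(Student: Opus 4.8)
The plan is to prove both directions by unwinding the construction of $\sigma_f$, which was designed precisely so that a representative $\varphi$ and its lift $\varphi'$ fit into a commuting square with $f$ and $C_\lambda$. In effect the theorem says that a fixed point of $\sigma_f$ and a witness of combinatorial equivalence encode the same data, so the work is in matching the two sets of hypotheses and handling the indeterminacies built into the pullback.

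For the ``if'' direction, suppose $\sigma_f$ has a fixed point $[\varphi]$. First I would fix a representative $\varphi$ with $\lambda := \varphi(x_1) = -\varphi(y_1)$ and $\varphi(\infty) = \infty$, and recall from the construction that there is then a lift $\varphi'$ satisfying $\varphi \circ f = C_\lambda \circ \varphi'$ with $\sigma_f([\varphi]) = [\varphi']$. The fixed-point hypothesis gives $[\varphi'] = [\varphi]$, so $\varphi'|_{P_f} = \varphi|_{P_f}$ and $\varphi'$ is isotopic to $\varphi$ rel $P_f$; together with $\varphi(\infty) = \varphi'(\infty) = \infty$ and the commuting square, these are exactly the data required by Definition \ref{topcosthurstonequiv}. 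The only remaining point is to check $P_{C_\lambda} = \varphi(P_f)$ and that $C_\lambda$ is postsingularly finite: this follows because the commuting square carries the $f$-orbit of the critical values $x_1, y_1$ onto the $C_\lambda$-orbit of $\pm\lambda$, whence $|P_{C_\lambda}| = |P_f| < \infty$. Thus $f$ is combinatorially equivalent to the postsingularly finite holomorphic cosine map $C_\lambda$.

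For the ``only if'' direction, suppose $f$ is combinatorially equivalent to a postsingularly finite holomorphic cosine map $C_\lambda$, witnessed by homeomorphisms $\varphi, \varphi'$. I would first argue that $[\varphi] = [\varphi'] \in \mathcal{T}_f$: since $\varphi$ conjugates the dynamics up to isotopy, it sends $\{x_1, y_1\}$ to the pair of critical values $\{\lambda, -\lambda\}$ of $C_\lambda$, and after possibly replacing $\lambda$ by $-\lambda$ (permissible because $C_\lambda$ and $C_{-\lambda}$ share the same critical-value pair and the lift in the construction of $\sigma_f$ is only defined up to the evenness symmetry $z \mapsto -z$ of $C_\lambda$) we may assume $\varphi(x_1) = \lambda = -\varphi(y_1)$, so $[\varphi] \in \mathcal{T}_f$; the hypotheses $\varphi'|_{P_f} = \varphi|_{P_f}$ and $\varphi' \simeq \varphi$ rel $P_f$ then give $[\varphi'] = [\varphi]$. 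It remains to identify $[\varphi']$ with $\sigma_f([\varphi])$. Since $\varphi \circ f = C_\lambda \circ \varphi'$, the map $\varphi'$ is a lift of $\varphi$ through $C_\lambda$ of exactly the type produced in the construction of $\sigma_f$; any two such lifts differ by a deck transformation of $C_\lambda$, i.e.\ by an element of the group generated by $z \mapsto z + 2\pi$ and $z \mapsto -z$, and the well-definedness of $\sigma_f$ established during its construction means all such lifts represent the same point of $\mathcal{T}_f$. Hence $\sigma_f([\varphi]) = [\varphi'] = [\varphi]$, so $[\varphi]$ is a fixed point.

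I expect the main obstacle to be the bookkeeping in the ``only if'' direction: reconciling the model $C_\lambda$ prescribed by the combinatorial equivalence with the model $C_{\varphi(x_1)}$ intrinsic to $\sigma_f$ (dealing with the sign ambiguity $\varphi(x_1) = \pm\lambda$), and verifying that the equivalence's lift $\varphi'$ lands in the class $\sigma_f([\varphi])$ despite the translation-by-$2\pi$ and sign indeterminacy of lifts. Both points rest on the invariance of $\sigma_f([\varphi])$ under change of representative and choice of lift, which is the content of the well-definedness already recorded in the construction of the Thurston pullback map.
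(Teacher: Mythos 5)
Your proposal is correct and follows essentially the same route as the paper: both directions reduce to matching the commuting square of a combinatorial equivalence with the commuting square defining $\sigma_f$, so that a fixed point and an equivalence witness encode the same data. The only difference is that you spell out the bookkeeping (the sign ambiguity $\varphi(x_1)=\pm\lambda$ and the identification of the equivalence's lift with the constructed lift up to deck transformations of $C_\lambda$) that the paper's proof leaves implicit in its assertion $\sigma_f([\varphi])=[\varphi']$.
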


\begin{proof}
     Suppose a postsingularly finite cosine map $f$ is combinatorially equivalent to a postsingularly finite holomorphic cosine map $C_\lambda$. Then, there exists homeomorphisms $\varphi, \varphi'$ as in Definition \ref{topcosthurstonequiv} with commutative diagram as follows. 
    \begin{center}
    \begin{tikzcd}
    S^2 \setminus\{\infty \} \arrow[d, "f"'] \arrow[r, "\varphi'"] & \mathbb{C} \arrow[d, "C_\lambda"] \\
    S^2 \setminus\{\infty \} \arrow[r, "\varphi"]                  & \mathbb{C}              
    \end{tikzcd}
    \end{center}
    So $\sigma_f([\varphi]) = [\varphi']$ and as $\varphi$ and $\varphi'$ are isotopic rel $P_f$, it follows that $[\varphi] = [\varphi']$.

    Now suppose $\sigma_f$ has a fixed point $[\varphi] \in \mathcal{T}_f$. Then, $f$ is combinatorially equivalent to $C_{\varphi(x_1)}$. Indeed $\varphi(P_f) = P_{C_{\varphi(x_1)}}$, $\varphi(\infty) = \infty$, and the diagram
    \begin{center}
    \begin{tikzcd}
    S^2 \setminus\{\infty \} \arrow[d, "f"'] \arrow[r, "\varphi"] & \mathbb{C} \arrow[d, "C_{\varphi(x_1)}"] \\
    S^2 \setminus\{\infty \} \arrow[r, "\varphi"]                  & \mathbb{C}              
    \end{tikzcd}
    \end{center}
    commutes by definition of $\sigma_f$.
\end{proof}

\section{Teichm\"uller Space in Relation to Quadratic Differentials}
\label{teich_quad_spaces_section}

In this section, we provide background on why quadratic differentials come into play for the proof of Theorem \ref{main_thm}. For further background on Teichm\"uller theory, see \cite{ahlfors}, \cite{gardiner_lakic}, \cite{hubbardvol1}, \cite{hubbardvol2}, \cite{imayoshi_taniguchi}, and \cite{lehto}.

\subsection{Basics of quadratic differentials}

Quadratic differentials form the backbone of establishing Theorem \ref{main_thm}. We provide basic definitions needed for our results. See [\cite{hubbardvol1}, Chapters 5-6] for information beyond what is defined here. Suppose $X$ is a hyperbolic Riemann surface. 

\begin{defn}\label{quad_diff_def}
    A \textit{holomorphic quadratic differential} is a section of the tensor square of the sheaf of holomorphic 1-forms. The vector space of holomorphic quadratic differentials is denoted $Q(X)$.  
\end{defn}

For our purposes, it will be more useful to think of $q \in Q(X)$ locally: $q = q(z)dz^2$ for $(U, z)$ in an atlas. The following space will be a key player in our calculations. 

\begin{defn}\label{int_quad_diff_def}
    The space of \textit{integrable quadratic differentials} is 
    \[Q^1(X) := \{q \in Q(X) : \norm{q}_1 < \infty \} 
    \]
    where the norm is the $L^1$ norm $\norm{q}_1 = \int_X |q|$ and the norm of $q$ is called its \textit{mass}.
\end{defn}

If $X = \hat{\mathbb{C}} \setminus A$ where $A$ is a finite set such that $|A| > 3$, we will use the following notation: $Q^1(A) := Q^1(\hat{\mathbb{C}} \setminus A)$. Quadratic differentials in this space can have at most simple poles in $A$ due to the requirement of being integrable. The notation $\normc{\cdot}$ will be used rather than the usual $L^1$ norm notation.

\subsection{Cotangent space to Teichm\"uller space} Let $[\varphi] \in \mathcal{T}_f$. Rather than working with the tangent space, $T_{[\varphi]}\mathcal{T}_f$ of $\mathcal{T}_f$, the cotangent space will prove more helpful. The cotangent space to $\mathcal{T}_f$, denoted $\mathcal{T}^*_{[\varphi]} \mathcal{T}_f$, is isomorphic to $Q^1(\varphi(P_f))$. This space is preferable to work with because the coderivative of $\sigma$, $d\sigma^*_{[\varphi']}: \mathcal{T}^*_{[\varphi']} \mathcal{T}_f \rightarrow \mathcal{T}^*_{[\varphi]} \mathcal{T}_f$, is given by pushing forward $q \in Q^1(\varphi'(P_f))$. That is, for $q \in Q^1(\varphi'(P_f))$,
\[
d\sigma^*_{[\varphi']}q=(C_{\varphi(x_1)})_*q.
\]
This follows in a similar way to [\cite{hubbardvol2}, Proposition 10.7.2]. 

\begin{remark}
    Let $q \in Q^1(\varphi'(P_f))$. One might think that a simple pole of $q$ may be made worse (that is, the multiplicity goes up) by pushing forward by $C_{\varphi(x_1)}$. Remarkably, the simple poles of $q$ stay simple poles of $(C_{\varphi(x_1)})_*q$. It is possible that $(C_{\varphi(x_1)})_*q$ may obtain more poles at the critical values but these poles are again at most simple. Therefore, $(C_{\varphi(x_1)})_*q$ is indeed in $Q^1(\varphi(P_f))$.
\end{remark}

The norm of the coderivative is therefore
\[
\norm{d\sigma^*_{[\varphi']}} = \norm{(C_{\varphi(x_1)})_*} = \sup \left\{\frac{\normc{(C_{\varphi(x_1)})_*q}}{\normc{q}} : q \in Q^1(\varphi'(P_f)) \setminus \{0 \} \right\}.
\]
Because the tangent space, and thus the cotangent space, to Teichm\"uller space is finite-dimensional, the derivative $d\sigma_{[\varphi]}: T_{[\varphi]} \mathcal{T}_f \rightarrow T_{[\varphi']} \mathcal{T}_f$ satisfies $\norm{d\sigma_{[\varphi]}} = \norm{d\sigma^*_{[\varphi']}}$. As in the exponential case, these norms are strictly less than 1. This is further explained in Section \ref{cos_eff_quad_diff_section} or one can also refer to [\cite{expthursmaps}, Section 3.2].

\begin{defn}\label{symmetric_poles}
    If $q$ is an integrable meromorphic quadratic differential with at most finitely many poles on $\hat{\mathbb{C}}$, then a pole of $q$ is called \textit{cos-symmetric} if there exists $k \in \mathbb{Z}$ and $z_0 \in \mathbb{C}^*$ such that the pole can be written as $k\pi + z_0$ and there exists another pole of $q$ at $k\pi - z_0$.
\end{defn}

Example \ref{example_eff_qd_with_cp} provides a sequence of quadratic differentials with four cos-symmetric poles (where $k=0$). In the context of Theorem \ref{main_thm}, quadratic differentials with cos-symmetric poles provide a potential way for mass to concentrate at a critical point and the quadratic differential appears to push forward with little loss of mass.

\subsection{The mass condition} 

Establishing Theorem \ref{main_thm} involves understanding where the mass of integrable meromorphic quadratic differentials on $\hat{\mathbb{C}}$ concentrate. The quadratic differentials of interest are those that push forward under cosine with little loss of mass. We initially thought that such quadratic differentials could not have mass concentrating at a critical point, but Example \ref{example_eff_qd_with_cp} provides evidence to the contrary. As a result, we impose a condition that keeps track of where the critical points are relative to where the mass concentrates. This condition comes from the adaptation to cosine of the key proposition (Proposition 3.2) in \cite{expthursmaps}. Proposition \ref{key_prop} is this adaptation (in its statement, the modulus should be thought of as large and $r$ should be thought of as sufficiently close to 1). 

\begin{prop}[Cos-efficiency gives rise to annuli]\label{key_prop}
    For all poles $N$, for all iterates $m$, and for all nonzero moduli $M$, there exists $r \in (0,1)$ such that the following property holds: if $q$ is an integrable meromorphic quadratic differential with at most $N$ poles and for all $\lambda_1, \ldots, \lambda_m \in \mathbb{C}^*$ such that $\normc{(C_{\lambda_m} \circ \cdots \circ C_{\lambda_1})_*q} > r\normc{q}$, there exist concentric disks $\tilde{D} \subset D$ such that 
    \begin{enumerate}[(1)]
        \item $\tilde{D}$ contains at least two poles of $q$,
        \item $\modulus (D \setminus \tilde{D}) \geq M$, and 
        \item if $k\pi \notin C_{\lambda_s} \circ \cdots \circ C_{\lambda_1}(D)$ for all $k \in \mathbb{Z}$ and $0 \leq s \leq m$, then $C_{\lambda_m} \circ \cdots \circ C_{\lambda_1}$ is injective on $D$.
    \end{enumerate}
\end{prop}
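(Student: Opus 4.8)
The plan is to follow the architecture of [\cite{expthursmaps}, Proposition 3.2], isolating the two places where the cosine's critical points force genuinely new behavior. First I would \emph{telescope} the hypothesis across the $m$ iterates. Writing $q^{(0)} = q$ and $q^{(s)} = (C_{\lambda_s})_* q^{(s-1)}$, each pushforward is mass non-increasing, so $\normc{q} \geq \normc{q^{(1)}} \geq \cdots \geq \normc{q^{(m)}} > r \normc{q}$; hence for $r$ close to $1$ every individual step is efficient, $\normc{q^{(s)}} > r\normc{q^{(s-1)}}$. This localizes the analysis to a single efficient cosine pushforward, whose output annuli are then reassembled across the stages. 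Since I want one $r$ working uniformly for all fixed $N, m, M$, I would run the argument by contradiction and compactness: assuming no $r$ works, choose normalized counterexamples $q_n$ (with $\normc{q_n} = 1$), efficiency constants $r_n \to 1$, and parameters $\lambda_i^{(n)}$ for which the conclusion fails, and pass to a limiting pole configuration, using that the space of at most $N$ points on $\hat{\mathbb{C}}$ is compact once collisions are allowed.

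The single-step heart is the mass identity: expanding $(C_\lambda)_* q$ over the local inverse branches $g_i$ of $C_\lambda$ and writing $q = \phi\,dz^2$ locally, one has $\normc{(C_\lambda)_* q} = \int_{\hat{\mathbb{C}}} \bigl| \sum_i (\phi \circ g_i)(g_i')^2 \bigr|\, |dw|^2$, which is at most $\normc{q}$ with loss occurring exactly where several branches overlap and their contributions fail to be positively aligned. Efficiency therefore forces almost all the mass of $q$ into a region on which $C_\lambda$ is essentially injective, in the sense that no two overlapping branches carry comparable mass. The cosine-specific subtlety is the behavior at a critical point $k\pi$: the even symmetry $\cos(k\pi + \zeta) = \cos(k\pi - \zeta)$ makes the two folding branches contribute with the \emph{same} phase, so mass can genuinely concentrate at a critical point without being lost. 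This is precisely the phenomenon realized by the cos-symmetric poles (Definition \ref{symmetric_poles}) of Example \ref{example_eff_qd_with_cp}, and it means I cannot rule out concentration at a critical point; I can only locate the concentration and separate it by an annulus. (It is also convenient here that $C_\lambda = M_\lambda \circ \cos$ with $M_\lambda(w) = \lambda w$ conformal, so $\normc{(C_\lambda)_* q} = \normc{\cos_* q}$ is independent of $\lambda$ at a single step.)

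From the concentration I would extract the concentric disks $\tilde D \subset D$. Almost all the mass sits in a bounded core; because $q$ has at most $N$ poles, I take $\tilde D$ to be a smallest disk capturing two poles around which the mass concentrates, giving (1), and $D$ a slightly larger concentric disk so the round annulus $D \setminus \tilde D$ carries negligible mass. The quantitative passage from the mass-loss budget $1-r$ to the lower bound $\modulus(D \setminus \tilde D) \geq M$ is the standard extremal-length estimate: a thin annulus of small modulus cannot insulate a high mass concentration, so forcing $\modulus(D \setminus \tilde D) < M$ would force a definite amount of mass across the non-injective overlap region, contradicting efficiency once $r$ is close enough to $1$. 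In the compactness formulation, failure of (2) for every $r$ would make the limiting pushforward exactly mass preserving while keeping the two poles a definite distance apart, which is impossible. For (3), assuming no critical point $k\pi$ lands in any intermediate image $C_{\lambda_s} \circ \cdots \circ C_{\lambda_1}(D)$, each cosine factor is unramified over the region it maps, hence a local biholomorphism onto its image; the definite conformal collar supplied by the large annulus of (2) prevents $D$ from wrapping enough to identify two points despite the $2\pi$-periodicity, and an induction on $s$ propagates injectivity through the whole composition.

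The step I expect to be the main obstacle is making the concentration-to-annulus estimate \emph{uniform} over the parameters. Although $\normc{(C_\lambda)_* q}$ is $\lambda$-independent at a single step, the intermediate scalings in $C_{\lambda_m} \circ \cdots \circ C_{\lambda_1}$ genuinely move each image relative to the lattice of critical points $k\pi$, so the family $(\lambda_1,\dots,\lambda_m) \in (\mathbb{C}^*)^m$ is noncompact and the compactness argument must be carried out on the pole-configuration data while simultaneously tracking where the critical points sit relative to the concentrating mass. This is exactly the difficulty created by the phase-aligned folding at critical points, and it is what the \hyperref[condition:mass_condition]{mass condition} is designed to control.
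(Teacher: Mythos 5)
Your skeleton---argue by contradiction with a sequence $r_n \to 1$, note that every individual pushforward step must then be efficient, locate the mass concentration, and surround it with an annulus---does match the paper's strategy, and your telescoping observation is essentially Lemma \ref{seq_of_cos_eff_push_forwards}. But there is a genuine gap at the engine of the argument: you replace the limit-model machinery with ``compactness of the space of at most $N$ points on $\hat{\mathbb{C}}$ once collisions are allowed,'' and that substitute does not do the work. When poles collide, the naive limit configuration retains no information about the \emph{rate} of collision, so your ``limiting pushforward'' is not a well-defined quadratic differential and the claimed contradiction (``exactly mass preserving while keeping two poles a definite distance apart'') has no precise meaning. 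The paper instead applies the Decomposition of Mass Theorem [\cite{expthursmaps}, Theorem 5.2] to split $q_n$ into pieces each of which, after rescaling by affine maps $M_n(z)=a_nz+b_n$, converges either to a fixed meromorphic $\hat q$ or to $dz^2/z^2$ on annuli in the sense of Definition \ref{limit_model_def}. Conclusions (1) and (2) are then read off from that structure: Propositions \ref{cos-eff-int-pushforwards}(1) and \ref{cos_eff_annular_pushfowards}(1) show cos-efficiency forces $a_n\to 0$ (resp.\ $R_n^*\to 0$), so one takes $\tilde D_n := M_n(D_{\tilde R}(0))$ and $D_n := M_n(D_{\tilde Re^{2\pi M}}(0))$ and the bound $\modulus(D_n\setminus\tilde D_n)\geq M$ is automatic by conformal invariance. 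It is not obtained from a mass-versus-modulus tradeoff; your assertion that ``a thin annulus of small modulus cannot insulate a high mass concentration'' is neither the mechanism nor proved.

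Conclusion (3) is also not established by your argument. Local injectivity of each cosine factor plus a ``conformal collar'' does not compose to global injectivity: $\cos$ restricted to a domain avoiding every $k\pi$ is injective only under additional shape and size constraints (for a round disk of radius less than $\pi$ this is equivalent to omitting the critical points, but the intermediate images $C_{\lambda_s}\circ\cdots\circ C_{\lambda_1}(D)$ are not round, and nothing in the hypothesis of (3) keeps them small---the $\lambda_i$ are unconstrained). A non-round or large domain omitting the lattice $\pi\mathbb{Z}$ can still contain a pair $z,w$ with $z+w\in 2\pi\mathbb{Z}$, on which $\cos$ identifies points, so your ``induction on $s$'' breaks at the first step. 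The paper gets (3) from part (2) of Propositions \ref{cos-eff-int-pushforwards} and \ref{cos_eff_annular_pushfowards}: under the hypothesis the translations $b_n^{(i)}$ stay away from the critical lattice, each stage again has a limit model with explicitly computed scalings $\hat M_n^{(i)}$, and the normalized maps $S_n^{(i+1)}=(\hat M_n^{(i)})^{-1}\circ\cos\circ M_n^{(i)}$ converge to the identity uniformly on compact sets, which is what makes the full composition injective on $D_n$ for large $n$. You correctly diagnosed the difficulties (noncompact parameters, phase-aligned folding at critical points), but the proof actually runs through the thick--thin rescaling analysis of Section \ref{adaptations_to_cosine_section}, which your sketch neither reproduces nor replaces.
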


 \begin{condition}\label{condition:mass_condition}
     In the setting of Proposition \ref{key_prop}, $k\pi \notin C_{\lambda_s} \circ \cdots \circ C_{\lambda_1}(D)$ for all $k \in \mathbb{Z}$ and $0 \leq s \leq m$.
\end{condition}

The \hyperref[condition:mass_condition]{mass condition} intuitively says that if a quadratic differential pushes forward with little loss of mass under several iterations of a cosine map, and if the mass concentrates in small disks, then the images of these disks stay away from critical points of cosine. The relevance of this condition is that $C_\lambda$ (where $\lambda \in \mathbb{C}^*)$ is not locally affine in a neigbourhood of a critical point and, thus, pushing forward a small neighbourhood is not like the exponential case.

Note also that the \hyperref[condition:mass_condition]{mass condition} is nondynamical. However, when used to understand postsingularly finite topological cosine maps in Proposition \ref{contraction_or_levy_cycle}, the condition is not actually needed for quadratic differentials with very specific configurations of their poles - this is discussed in Remark \ref{some_q_do_not_occur}.

\begin{remark}
    The \hyperref[condition:mass_condition]{mass condition} can be eliminated in Theorem \ref{main_thm} if it can be shown that if a quadratic differential has mass that concentrates in a neighbourhood of a critical point of cosine, then it must push forward with a definite loss of mass under some number of iterations $C_{\lambda_s} \circ \cdots \circ C_{\lambda_1}$ where $\lambda_1, \ldots, \lambda_s \in \mathbb{C}^*$.
\end{remark}

\subsubsection{Proofs of key results}

Here, we provide analogous statements for cosine from \cite{expthursmaps} (specifically Proposition 3.3 and Theorem 2.4). 

\begin{prop}[Existence of Levy cycle]\label{contraction_or_levy_cycle}
    Assume the \hyperref[condition:mass_condition]{mass condition} holds. Let $f$ be a postsingularly finite topological cosine map such that all critical points are strictly preperiodic. Let $k$ be the sum of the preperiod and period of the critical orbit. For every $d_0 > 0$, there exists $r \in (0,1)$ such that if $\tau \in \mathcal{T}_f$ with $d(\tau, \sigma(\tau)) < d_0$ and $\norm{d\sigma^{\circ k}(\tau)} > r$, then $f$ has a degenerate Levy cycle.
\end{prop}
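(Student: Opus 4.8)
The plan is to convert the analytic hypothesis $\norm{d\sigma^{\circ k}(\tau)} > r$ into a statement about a single integrable quadratic differential that is nearly preserved under $k$ pushforwards, and then feed this into Proposition \ref{key_prop} to manufacture a system of large-modulus annuli whose core curves assemble into a degenerate Levy cycle. First I would use the identification of the coderivative with pushforward, together with the finite-dimensionality fact $\norm{d\sigma_{[\varphi]}} = \norm{d\sigma^*_{[\varphi']}}$, to pass from the iterated derivative to an iterated pushforward. Writing the orbit $\tau = \tau_0, \tau_1 = \sigma(\tau_0), \ldots, \tau_{k}$ with $\tau_j = [\varphi_j]$ and $\lambda_j := \varphi_j(x_1)$, the chain rule identifies $d(\sigma^{\circ k})^*_{\tau}$ with the pushforward operator under a length-$k$ composition of cosine maps $C_{\lambda_1}, \ldots, C_{\lambda_k}$ (after relabeling to match the ordering in Proposition \ref{key_prop}). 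Since the relevant cotangent space $Q^1(\varphi_0(P_f))$ is finite-dimensional, the supremum defining the operator norm is attained, so $\norm{d\sigma^{\circ k}(\tau)} > r$ yields a nonzero $q$ with at most $N := \abs{P_f}$ poles satisfying $\normc{(C_{\lambda_k} \circ \cdots \circ C_{\lambda_1})_* q} > r\normc{q}$.

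Second, with $N$, with $m = k$, and with a large target modulus $M$ fixed in advance, Proposition \ref{key_prop} supplies the threshold $r \in (0,1)$ and, for this $q$, concentric disks $\tilde D \subset D$ with $\tilde D$ containing at least two poles of $q$ and $\modulus(D \setminus \tilde D) \geq M$. Because the \hyperref[condition:mass_condition]{mass condition} is assumed, conclusion (3) of Proposition \ref{key_prop} applies and $C_{\lambda_k} \circ \cdots \circ C_{\lambda_1}$ is injective on $D$; in particular each partial composition maps $D$ to an embedded disk and carries the core curve $\gamma_0$ of the round annulus $D \setminus \tilde D$ homeomorphically onto its image. I would then set $D_s := (C_{\lambda_s} \circ \cdots \circ C_{\lambda_1})(D)$ with core curve $\gamma_s$, check that each $\gamma_s$ is essential in the sense of Definition \ref{essential_def}---the bounded side inherits the two poles of $\tilde D$, and one arranges, using $\abs{P_f} > 3$ and the admissible simple pole at $\infty$, that the unbounded side also retains at least two marked points---and note that the large modulus makes each $\gamma_s$ a short geodesic for the hyperbolic metric on $\hat{\mathbb{C}} \setminus \varphi_s(P_f)$.

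Third, to close the sequence $\gamma_0, \ldots, \gamma_k$ into a cycle I would exploit that the critical orbit is strictly preperiodic with preperiod-plus-period equal to $k$, so that the combinatorial configuration of $\varphi_k(P_f)$ returns to that of $\varphi_0(P_f)$; the hypothesis $d(\tau, \sigma(\tau)) < d_0$ provides the uniform bound on quasiconformal distortion along the orbit that renders the punctured surfaces $\hat{\mathbb{C}} \setminus \varphi_s(P_f)$ uniformly comparable, so that a curve short on one stays short on the next and its homotopy class is tracked unambiguously. This forces $\gamma_k$ to be homotopic rel $\varphi_0(P_f)$ to $\gamma_0$. Since $C_{\lambda_s}$ carries $\gamma_{s-1}$ homeomorphically onto $\gamma_s$ and is injective on $\overline{D}_{\gamma_{s-1}}$, the curve $\gamma_{s-1}$ is a preimage component of $\gamma_s$ and $C_{\lambda_s}$ restricts to a homeomorphism $\overline{D}_{\gamma_{s-1}} \to \overline{D}_{\gamma_s}$; transporting this picture back to $S^2$ through the conjugating homeomorphisms $\varphi_s, \varphi_s'$ of the Thurston construction produces precisely the data of Definition \ref{Levy_cycle}, and the injectivity furnished by the \hyperref[condition:mass_condition]{mass condition} is exactly what upgrades the cycle to a degenerate one.

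The hard part will be the closing-up and essentiality bookkeeping in the third step: turning the finitely many annuli produced disk-by-disk by Proposition \ref{key_prop} into a genuinely $f^{-1}$-invariant cycle of essential curves, rather than an eventually-periodic chain that might be inessential or split the postsingular set the wrong way. Controlling this requires showing that the two poles captured in $\tilde D$ are the ones carrying the near-maximal mass and that they remain grouped under iteration, which is where the finite-dimensionality and the uniform distortion bound from $d(\tau, \sigma(\tau)) < d_0$ must be combined with care; the treatment of $\infty$ as an admissible simple pole and the verification that each $\gamma_s$ separates $\varphi_s(P_f)$ nontrivially are the technical points most likely to lean essentially on the strictly preperiodic hypothesis and on $\abs{P_f} > 3$.
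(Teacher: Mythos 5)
Your proposal follows essentially the same route as the paper: identify the coderivative of $\sigma^{\circ k}$ with the pushforward under $C_{\lambda_k}\circ\cdots\circ C_{\lambda_1}$, extract a quadratic differential $q$ with $\normc{(C_{\lambda_k}\circ\cdots\circ C_{\lambda_1})_*q}>r\normc{q}$, feed it to Proposition \ref{key_prop} with the mass condition supplying injectivity on the large-modulus annulus, and then extract the degenerate Levy cycle from the resulting $k$ annuli exactly as in the exponential case of Hubbard--Schleicher--Shishikura. The paper itself defers the closing-up and essentiality bookkeeping you flag as the hard part to the exponential argument (noting only that essentiality follows because both critical values can never lie in the same complementary component of such an annulus), so your sketch matches the paper's proof in all the steps it actually carries out.
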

 
\begin{proof}
    This proof is almost the same as [\cite{expthursmaps}, Proposition 3.3], and so we highlight the key differences. Note that there is one extra point in $P_f$ but it causes no problems due to the merging of the critical orbits and that we are in the preperiodic setting. Let $d_0 > 0$. For $|P_f| = k+1$ poles, $k$ iterates, and modulus $M:= \frac{\pi}{\log (3+2\sqrt{2})}ke^{kd_0}$, Proposition \ref{key_prop} gives us the existence of some $r \in (0,1)$; we show this $r$ works in this setting. 
    
    Let $\tau \in \mathcal{T}_f$ with $d(\tau, \sigma(\tau)) < d_0$ and $\norm{d\sigma^{\circ k}(\tau)} > r$. If $\tau = [\varphi_0]$ and $[\varphi_{n}] := \sigma^{\circ n}([\varphi_0])$ for $n \geq 1$, then note that
    \begin{align*}
        \norm{(d\sigma^{\circ k})^*_{[\varphi_n]}} &= \norm{(C_{\lambda_k} \circ \cdots \circ C_{\lambda_1})_*} \\
        &= \sup \left\{\frac{\normc{(C_{\lambda_k} \circ \cdots \circ C_{\lambda_1})_*q}}{\normc{q}} : q \in Q^1(\varphi_{n+k}(P_f)) \setminus \{0 \} \right\}
    \end{align*}
    where each $\lambda_i = \varphi_{n+k-i}(x_1)$ for $1 \leq i \leq k$. Thus, the assumption $\norm{d\sigma^{\circ k}(\tau)} > r$ implies that there exists $q \in Q^1(\varphi_k(P_f))$ such that
    \[
    \normc{(C_{\lambda_k} \circ \cdots \circ C_{\lambda_1})_*q} > r\normc{q}.
    \]
    Proposition \ref{key_prop} then implies that there exists concentric disks $\tilde{D} \subset D$ such that $\tilde{D}$ contains at least two poles of $q$ and $\modulus(D \setminus \tilde{D}) \geq M$. The main difference between the exponential and cosine case occurs here - understanding where $D$ is relative to the critical points of cosine. Because the \hyperref[condition:mass_condition]{mass condition} holds, $k\pi \notin C_{\lambda_s} \circ \cdots \circ C_{\lambda_1}(D)$ for all $k \in \mathbb{Z}$ and $0 \leq s \leq k$. Thus, by (3) of Proposition \ref{key_prop}, $C_{\lambda_s} \circ \cdots \circ C_{\lambda_1}$ is injective on $D$ for $0 \leq s \leq k$.
    
    At this point, the proof continues as in the exponential case - it is possible to extract a degenerate Levy cycle from $k$ annuli, which are essential in the complement of the postsingular set due to the fact that both critical values can never be contained in the same component such an annulus bounds.
\end{proof}

\begin{remark}\label{some_q_do_not_occur}
    Imposing the mass condition comes from candidate cos-efficient quadratic differentials like Example \ref{example_eff_qd_with_cp} in the next section. But, in fact, such quadratic differentials may not arise in the setting of Proposition \ref{contraction_or_levy_cycle} so it is possible to make the \hyperref[condition:mass_condition]{mass condition} less restrictive with specific quadratic differentials. We outline some cases where the \hyperref[condition:mass_condition]{mass condition} need not be used in the proof of Proposition \ref{contraction_or_levy_cycle}. These cases have cos-symmetric poles concentrated near a critical point. 

    Let $f$ be a postsingularly finite topological cosine map such that all critical points are strictly preperiodic with $k$ denoting the sum of the preperiod and period of the critical orbit. Let $d_0 > 0$. Take $k+1$ poles, $k$ iterates, and modulus $M$ as in the proof of Proposition \ref{contraction_or_levy_cycle}. Use these to obtain an $r \in (0,1)$ from Proposition \ref{key_prop}. Let $\tau \in \mathcal{T}_f$ with $d(\tau, \sigma(\tau)) < d_0$ and $\norm{d\sigma^{\circ k}(\tau)} > r$. For $\tau = [\varphi_0]$, let $[\varphi_{n}] := \sigma^{\circ n}([\varphi_0])$ for $n \geq 1$. Translate the assumption $\norm{d\sigma^{\circ k}(\tau)} > r$ to the existence of $q \in Q^1(\varphi_k(P_f))$ such that
    \[
    \normc{(C_{\lambda_k} \circ \cdots \circ C_{\lambda_1})_*q} > r\normc{q}.
    \]
    where each $\lambda_i = \varphi_{k-i}(x_1)$ for $1 \leq i \leq k$. There exists concentric disks $\tilde{D} \subset D$ such that $\tilde{D}$ contains at least two poles of $q$ and $\modulus(D \setminus \tilde{D}) \geq M$. Suppose there are more than two poles of $q$ in $\tilde{D}$, all of which are cos-symmetric (see Example \ref{example_eff_qd_with_cp}). We outline some cases below (that do not require the \hyperref[condition:mass_condition]{mass condition}) according to whether the critical values in $\varphi_k(P_f)$ are in $\tilde{D}$.
    \begin{itemize}
        \item If $\varphi_k(x_1)$ and $\varphi_k(y_1)$ are not in $\tilde{D}$, then it follows that $f$ maps at least four points of $P_f$ that are not critical values to two points of $P_f$. A postsingularly finite topological cosine map cannot have this occur. 

        \item If exactly one of $\varphi_k(x_1)$ and $\varphi_k(y_1)$ is in $\tilde{D}$, then $f$ maps at least three points of $P_f$ that are not critical values to two points of $P_f$. Once again, a postsingularly finite topological cosine map cannot have this occur.
        
        \item Suppose $\varphi_k(P_f) = \varphi_{k-1}(P_f)$ and $\varphi_k(x_1), \varphi_k(y_1) \in \tilde{D}$. The image $C_{\lambda_1}(\tilde{D})$ is a topological disk that contains at least 3 poles in $\varphi_k(P_f)$, of which one is $\lambda_1$. Because we have $\normc{(C_{\lambda_1})_*q} > r\normc{q}$, $C_{\lambda_1}(\tilde{D})$ intersects $\tilde{D}$ in such a way that implies that $\tilde{D}$ contains at least one more pole of $q$ than it already does. Thus, this case cannot occur. In fact, this particular case allows for $q$ to have exactly two poles of $q$ in $\tilde{D}$ with the same contradiction occurring.  
    \end{itemize} 
    Therefore, such quadratic differentials $q$ do not arise as a result of the Thurston iteration. In general, the combinatorial contradiction that occurs above happens when $\tilde{D}$ contains at least six cos-symmetric poles of $q$ (i.e., $f$ maps at least four points of $P_f$ that are not critical values to two points of $P_f$, something that cannot occur for a postsingularly finite topological cosine map). Thus, if these specific quadratic differentials arise in the nondynamical setting that will be discussed in Section \ref{cos_eff_quad_diff_section}, they cannot arise in the dynamical setting of Proposition \ref{contraction_or_levy_cycle} and hence Theorem \ref{main_thm}. 
\end{remark}

\begin{proof}[Proof of Theorem \ref{main_thm}]
    This proof is the same as [\cite{expthursmaps}, Main Theorem 2.4]. The general idea is to run the Thurston iteration on  $\sigma_f: \mathcal{T}_f \rightarrow \mathcal{T}_f$. By identifying the coderivative with the cosine push-forward, one can then appeal to Proposition \ref{contraction_or_levy_cycle} to obtain a degenerate Levy cycle.
\end{proof}

\section{Cosine Efficient Quadratic Differentials}
\label{cos_eff_quad_diff_section}

\subsection{Pushing forward by cosine.}\label{pushing_forward_subsection}

Given a covering map, the push-forward of an integrable quadratic differential can be calculated as the direct image operator (see [\cite{hubbardvol1}, Definition 5.4.15]). Let $[\varphi] \in \mathcal{T}_f$, $q \in Q^1(\varphi(P_f))$, and write $q = q(z)dz^2$. For the simplest case of $\lambda = 1$, if $w = \cos(z)$, then $dw = -\sin(z)dz$ and thus we have
\[
\cos_*(q(z)dz^2) = \sum_{z \in \cos^{-1}(w)} \frac{q(z)}{\sin^2(z)}dw^2 = \frac{dw^2}{1-w^2} \sum_{k \in \mathbb{Z}} q(2\pi k \pm \arccos(w)).
\]
A similar formula can be found for $C_\lambda(z) = \lambda\cos z$ but for the results of Section \ref{adaptations_to_cosine_section}, it suffices to understand the simpler case. As in the case of the exponential, this gives a concrete formula for what the coderivative of $\sigma$ can really look like. Understanding this push-forward is the heart of Section \ref{cosine_efficiency_subsection}.

\subsection{Cosine efficiency}
\label{cosine_efficiency_subsection}
The quality of being cosine efficient is a quality of a sequence of measurable quadratic differentials $(q_n)$. It helps us understand how the ratio of the mass of $\cos_*q_n$ compares to that of the original quadratic differential $q_n$. In general, the push-forward does not increase the $L^1$ norm so being cosine efficient means that for very large $n$, cosine pushes forward $q_n$ with almost negligible loss of mass. 

\begin{defn}
    Let $(q_n)$ be a sequence of measurable quadratic differentials. The sequence $q_n$ is called \textit{cos-efficient} if $\norm{q_n}_{\mathbb{C}} \neq 0$ and
    \[
        \lim_{n \to \infty} \frac{\norm{\cos_*q_n}_\mathbb{C}}{\norm{q_n}_\mathbb{C}} = 1. 
    \]
\end{defn}

\begin{defn}
    A non-zero meromorphic integrable quadratic differential $q$ is said to have \textit{absolutely efficient} push-forward by cosine if $\normc{\cos_*q} = \normc{q}$. 
\end{defn}

\begin{lemma}\label{cos-pushforward-not-abs-eff}
Let $q$ be a non-zero meromorphic integrable quadratic differential. Then the push-forward by $f(z) = \cos(az+b)$, where $a, b \in \mathbb{C}$ with $a \neq 0$, of $q$ cannot be absolutely efficient. 
\end{lemma}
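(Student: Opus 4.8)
The plan is to combine the direct-image formula for the push-forward with the equality case of the triangle inequality and the periodicity of $\cos(az+b)$. Writing $q = q(z)\,dz^2$ and $w = f(z)$, the direct image is
\[
f_*q = \Bigl(\sum_{z \in f^{-1}(w)} \frac{q(z)}{f'(z)^2}\Bigr)\,dw^2,
\]
and the change of variables $w = f(z)$, valid off the discrete critical set, yields
\[
\int_{\mathbb{C}} \sum_{z \in f^{-1}(w)} \frac{|q(z)|}{|f'(z)|^2}\,dA(w) = \int_{\mathbb{C}} |q(z)|\,dA(z) = \normc{q}.
\]
This already gives $\normc{f_*q} \le \normc{q}$, so absolute efficiency forces equality in $\bigl|\sum_z q(z)/f'(z)^2\bigr| \le \sum_z |q(z)|/|f'(z)|^2$ for almost every $w$. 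Equality in the triangle inequality means that, for a.e.\ $w$, all nonzero fiber contributions $q(z)/f'(z)^2$, $z \in f^{-1}(w)$, share one common argument.

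The key simplification is that $f'(z)^2 = a^2\sin^2(az+b) = a^2(1-w^2)$ depends only on $w$, so it is constant along each fiber; hence the alignment condition becomes a condition on $q$ alone: for a.e.\ $w$, the values $q(z)$ with $z \in f^{-1}(w)$ all have the same argument. Setting $p := 2\pi/a$, the translation $T(z) = z+p$ satisfies $f \circ T = f$, so $z$ and $z+p$ always lie in a common fiber. Therefore $\rho(z) := q(z+p)/q(z) \in \mathbb{R}_{>0}$ for a.e.\ $z$. Since a non-constant meromorphic function is real-valued only on a set of measure zero (the zero set of its non-trivial imaginary part), $\rho$ must be a constant $\kappa \in \mathbb{R}_{>0}$, i.e.\ $q(z+p) = \kappa\,q(z)$ for all $z$.

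I would finish with a mass-divergence argument. Let $F$ be a fundamental strip for $z \mapsto z+p$. Iterating the functional equation gives $q(z+np) = \kappa^n q(z)$, hence $\int_{F+np}|q| = \kappa^n \int_F |q|$, so
\[
\normc{q} = \sum_{n \in \mathbb{Z}} \int_{F+np}|q| = \Bigl(\sum_{n \in \mathbb{Z}} \kappa^n\Bigr)\int_F |q|.
\]
For every $\kappa > 0$ the two-sided geometric series diverges, so finiteness of $\normc{q}$ forces $\int_F |q| = 0$ and thus $q \equiv 0$, contradicting $q \ne 0$. This rules out absolute efficiency.

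The step I expect to require the most care is the equality analysis: justifying that equality in the triangle inequality for the (infinite) fiber sum holds for a.e.\ $w$ and does force a common argument, and that this ``a.e.\ $w$'' statement transfers to the functional equation holding for a.e.\ $z$. This uses that $f$ is a local biholomorphism off a discrete set and that $q$ has isolated zeros, so that $q(z) \ne 0$ and $f'(z) \ne 0$ for a.e.\ $z$. Once the functional equation $q(z+p)=\kappa q(z)$ is in hand, the rigidity of meromorphic functions and the divergence of the bilateral geometric series make the contradiction immediate.
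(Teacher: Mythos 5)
Your proof is correct and follows essentially the same route as the paper's (much terser) argument: absolute efficiency forces the functional equation $q\left(z+\frac{2\pi}{a}\right) = c\cdot q(z)$ for a positive real constant $c$, which is incompatible with $\normc{q} < \infty$. You have simply supplied the details the paper leaves implicit --- the equality case of the triangle inequality over the fiber (using that $f'(z)^2 = a^2(1-w^2)$ is constant on fibers), the rigidity argument showing the ratio $q(z+p)/q(z)$ is a positive constant, and the divergence of the bilateral geometric series.
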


\begin{proof}
    Suppose (for contradiction) that $q$ is absolutely efficient so that $\normc{f_*q} = \normc{q}$. Then, $q\left(z+\frac{2\pi}{a}\right) = c \cdot q(z)$ for some positive real constant $c$. But then $\normc{q} = \infty$, a contradiction.
\end{proof}

\subsection{Limit models.} 

Hubbard, Schleicher, and Shishikura provide a ``thick-thin" decomposition of quadratic differentials on a general hyperbolic Riemann surface of finite type [\cite{expthursmaps}, Theorem 4.1 (Decomposition of quadratic differentials)]. Given a sequence of measurable quadratic differentials on $\mathbb{C}$, this allows them to understand the limits of each of the pieces in their decomposition as one of two models (up to a subsequence) as well as how the mass of these quadratic differentials is decomposed. The definition that is key to this discussion is the following. 

\begin{defn}[\cite{expthursmaps}, Definition 5.1]\label{limit_model_def}
    Let $(q_n)$ be a sequence of measurable quadratic differentials on $\mathbb{C}$ with $0 < \normc{q_n} < \infty$ for each $n$, such that $\lim_{n \to \infty} \normc{q_n}$ exists and is non-zero.
    \begin{enumerate}[(1)]
        \item \textit{Thick case:} If $q$ is a meromorphic quadratic differential with $0 < \normc{q} < \infty$, then \textit{$(q_n)$ has limit model $q$} if there exists affine maps $M_n(z) = a_nz+b_n$ (with $a_n, b_n \in \mathbb{C}$) satisfying 
        \[
        \lim_{n \to \infty} \normc{(M_n)^*q_n - q} = 0.
        \]
        We call $M_n$ the \textit{scaling} and $a_n$ the \textit{scaling factor}.

        \item \textit{Thin case:} We say \textit{$(q_n)$ has limit model $\frac{dz^2}{z^2}$ on annuli}
        \[
        A_n = \{z \in \mathbb{C} : r_n < |z-b_n| < R_n \},
        \]
        where $0<r_n<R_n$ and $b_n \in \mathbb{C}$, if there exists $c_n \in \mathbb{C}^*$ such that $\frac{R_n}{r_n} \to \infty$ and the affine maps $M_n(z) = z + b_n$ satisfy
        \[
        \lim_{n \to \infty} \normc{(M_n)^*q_n - c_n\frac{dz^2}{z^2}\Big|_{\{r_n < |z| <R_n\}} } = 0.
        \]
        We call $r_n$ the \textit{inner radius of $A_n$}, $R_n$ the \textit{outer radius of $A_n$}, and $b_n$ the \textit{center of $A_n$}.
    \end{enumerate}
\end{defn}

\begin{remark}\label{b_n_is_a_pole}
In the thick case, it suffices to suffices to assume $q$ has a pole at $0$ because 
\[
\normc{(M_n)^*q_n - q} = \normc{(M_n \circ M)^*q_n - M^*q}
\]
for an affine transformation $M: \mathbb{C} \rightarrow \mathbb{C}$. Therefore, $M(0)=b_n$ corresponds to a pole of $q_n$. Similarly, for the thin case, we may assume $b_n$ corresponds to a pole of $q_n$.
\end{remark}

\begin{remark}\label{each_piece_cos_eff_remark}
    Suppose $(q_n)$ is a sequence of measurable quadratic differentials on $\mathbb{C}$. There exists a subsequence of $(q_n)$ such that in the decomposition of each $q_n$ into integrable quadratic differentials, one can study each of the limit models above separately [\cite{expthursmaps}, Theorem 5.2 (Decomposition of mass)]. The importance of this in relation to efficiency is, for instance, if one is trying to show that this sequence is not cos-efficient, then it suffices to work piece by piece and show that one piece is not cos-efficient. This will imply that as a whole, $(q_n)$ is not cos-efficient. This technique is employed in Section \ref{adaptations_to_cosine_section}. 
\end{remark}

We next present two examples of candidate cos-efficient sequences of quadratic differentials. The second example indicates why the \hyperref[condition:mass_condition]{mass condition} is present in Theorem \ref{main_thm}. Our examples aim to show that very little mass is lost when pushing forward by cosine and, furthermore, mass is indeed concentrated in very small disks.

\begin{figure}[b]
  \centering
  \begin{tabular}{@{}c@{}}
    \begin{tikzpicture}[ scale = 2 ]
        \draw[gray!30, fill=gray!30]    (1.25,0.1) -- (3.75,0.1) -- (3.75,-0.1) -- (1.25,-0.1) ;
        
        \draw[thick] (-1,1) node[left,black] {$1$}  -- (-1,-1) ;
        \draw[thick] (-1,1)  -- (1,1) ;
        \draw[thick] (-1,-1)  -- (1,-1) ;
        \draw[thick] (1,1)  -- (1,0.1) ;
        \draw[thick] (1,-1)  -- (1,-0.1) ;
        \draw[thick] (1,0.1)  -- (4,0.1) node[right,black] {$0$} ;
        \draw[thick] (1,-0.1)  -- (4,-0.1) node[right,black] {$\infty$};
        \draw[thick] (4,0.1)  -- (4,-0.1) ;
        
        \draw[|-|] (-1,-1.1)  -- (1,-1.1) node[midway, below] {$1$};
        \draw[|-|] (-1.25,1)  -- (-1.25,-1) node[midway, left] {$1$};
        \draw[|-|] (0.9,0.1)  -- (0.9,-0.1) node[midway, left] {$\frac{1}{n}$};
        \draw[|-|] (1.03,-0.2)  -- (4,-0.2) node[midway, below] {$n$};
        \end{tikzpicture} \\[\abovecaptionskip]
    \small (a) Represented as a polygon
  \end{tabular}

  \vspace{\floatsep}

  \begin{tabular}{@{}c@{}}
    \begin{tikzpicture}[ scale = 2 ]
        \draw[thick] (-2,0) -- (0,0) node[below]{$0$};
        \draw[thick, gray!30, fill=gray!30] (1,0) circle[radius=0.6];
        \draw[thick, gray!30, fill=white] (1,0) circle[radius=0.4];
    
        \draw (0,0) node{$\bullet$};
        \draw (1,0) node{$\bullet$} node[below]{$1$};
        \draw (0.9,0) node{$\bullet$};
        \draw (0.8,0) node{$\bullet$};
        \draw (1.1,0) node{$\bullet$};
        \draw (1.2,0) node{$\bullet$};
        \draw (1.3,0) node{$\bullet$};
        \end{tikzpicture} \\[\abovecaptionskip]
    \small (b) Represented in $\mathbb{C}$
  \end{tabular}

  \caption{A candidate cos-efficient sequence of quadratic differentials is pictured here. Each quadratic differential has 6 poles and 2 zeroes, and mass $4$ with the majority of its mass concentrated in a small neighbourhood of $1$. The polygonal representation shows that an annulus of very large modulus separates $1$ from $0$ and $\infty$, which then gives an interpretation for how close poles are in $\mathbb{C}$. The line connecting $0$ to $\infty$ in the polygonal representation becomes the negative real axis in $\mathbb{C}$, as shown.}\label{fig:quad_diff}
\end{figure}

\begin{example}\label{example_eff_qd}
    Figure \ref{fig:quad_diff} provides an example of what a sequence of cos-efficient quadratic differentials may be thought of as, both represented as a polygon and pictured in $\mathbb{C}$. In this illustration, note that the following logic works just as well for the exponential map (so in particular, the sequence of quadratic differentials can be thought of as exp-efficient). We will work with round annuli because an annulus of large modulus contains a round subannulus with the same modulus minus a small constant [\cite{mcmullen_complex_dynamics}, Theorem 2.1].
    
    Although this polygonal representation is a special case, its benefit is that we can assume that when gluing two copies along their common boundary, this common boundary is homeomorphic to $\mathbb{R} \cup \{\infty \}$. This is how we are able to obtain the representation of this quadratic differential in $\mathbb{C}$. For further background on the construction of quadratic differentials in this way, see \cite{hubbardvol1} and \cite{expthursmaps}. The shaded region will help us in determining how close these zeroes and poles are to each other. 

    \begin{figure}
    \centering
    \begin{tikzpicture}[ scale = 2.2 ]
        \draw[thick, gray!30, fill=gray!30] (1,0) circle[radius=0.7];
        \draw[thick, gray!30, fill=white] (1,0) circle[radius=0.5];
        \draw[thick] (1,0) circle[radius=0.8];
        \draw[thick] (1,0) circle[radius=0.3];
    
        \draw (0.2,0) node{$\bullet$} node[left]{$-1$};
        \draw (1,0) node{$\bullet$} node[below]{$0$};
        \draw (0.85,0) node{$\bullet$};
        \draw (0.7,0) node{$\bullet$};
        \draw (1.1,0) node{$\bullet$};
        \draw (1.2,0) node{$\bullet$};
        \draw (1.3,0) node{$\bullet$};

        \draw (1.25,0) node[right]{$R_1$};
        \draw (1.433013,0.25) node[]{$R_2$};
        \draw (1.35,0.606218) node[]{$R_3$};
    \end{tikzpicture}
    \caption{Conformal mapping of Figure \ref{fig:quad_diff} with various radii labeled}
    \label{fig:quad_diff_conf_map}
\end{figure}
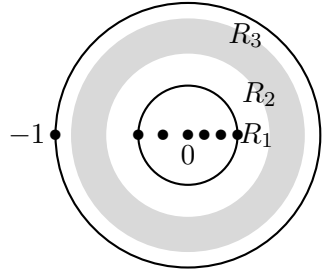
    
    To see why this sequence of quadratic differentials is a candidate cos-efficient sequence, conformally map Figure \ref{fig:quad_diff} by $z \mapsto \sqrt{z}$ and then $z \mapsto \frac{z-1}{z+1}$ to obtain Figure \ref{fig:quad_diff_conf_map}. Let 
    \[
    A(r, R) := \{z \in \mathbb{C} : r < |z| < R \}.
    \]
    Then, think of how the exponential map maps the cylinder of circumference $\frac{1}{n}$ and height $n$ onto $A(R_1, 1)$. By taking the padding (or unshaded) regions of this cylinder to be $\frac{1}{n}$, it follows that the modulus of $A(R_1, R_2)$ and $A(R_3, 1)$ is $n$ and the modulus of $A(R_2, R_3)$ is $n(n-2)$. Therefore, $R_1 = e^{-2\pi n^2}$, $R_2 = {e^{-2\pi n(n-1)}}$, and $R_3 = e^{-2\pi n}$. Although we must conformally map back to the original plane, the point is that Figure \ref{fig:quad_diff_conf_map} is wildly out of proportion in showing how close the zeroes and poles in the disk of radius $R_3$ are to each other (for instance, think of this picture when $n=10$). Furthermore, this shows that the quadratic differential in Figure \ref{fig:quad_diff} has the majority of its mass concentrated in a very small neighbourhood of $1$. Note that thinking from the point of view of either cosine or the exponential, this neighbourhood maps forward by a locally affine map and thus there is almost neglible mass lost in the push-forward. 
\end{example}

\begin{figure}
  \centering
  \begin{tabular}{@{}c@{}}
    \begin{tikzpicture}[ scale = 2 ]
    \draw[gray!30, fill=gray!30]    (1.25,0.1) -- (3.75,0.1) -- (3.75,-0.1) -- (1.25,-0.1) ;
    
    \draw (-1,1) node[left,black] {$-a_n$}  -- (-1,-1) node[midway, left] {$0$};
    \draw (-1,1)  -- (1,1) ;
    \draw (-1,-1) node[left,black] {$a_n$}  -- (1,-1) ;
    \draw (1,1) node[right,black] {$-b_n$}  -- (1,0.1) ;
    \draw (1,-1) node[right,black] {$b_n$}  -- (1,-0.1) ;
    \draw (1,0.1)  -- (4,0.1) node[right,black] {$\infty$} ;
    \draw (1,-0.1)  -- (4,-0.1) node[right,black] {$1$};
    \draw (4,0.1)  -- (4,-0.1) ;
    
    \draw[|-|] (-1,-1.1)  -- (1,-1.1) node[midway, below] {$1$};
    \draw[|-|] (-1.5,1)  -- (-1.5,-1) node[midway, left] {$1$};
    \draw[|-|] (0.9,0.1)  -- (0.9,-0.1) node[midway, left] {$\frac{1}{n}$};
    \draw[|-|] (1.03,-0.2)  -- (4,-0.2) node[midway, below] {$n$};
    \end{tikzpicture} \\[\abovecaptionskip]
    \small (a) Represented as a polygon
  \end{tabular}

  \vspace{\floatsep}

  \begin{tabular}{@{}c@{}}
    \begin{tikzpicture}[ scale = 2 ]
    \draw[thick] (1,0) node[below]{$1$} -- (3,0);
    \draw[thick, gray!30, fill=gray!30] (-1,0) circle[radius=0.7];
    \draw[thick, gray!30, fill=white] (-1,0) circle[radius=0.5];

    \draw (1,0) node{$\bullet$};
    \draw (-0.9,0) node{$\bullet$}; \draw (-0.8,-0.11) node{$a_n$};
    \draw (-0.7,0) node{$\bullet$}; \draw (-0.6,-0.11) node{$b_n$};
    \draw (-1.1,0) node{$\bullet$}; \draw (-1.13, -0.11) node{$-a_n$};
    \draw (-1.3,0) node{$\bullet$}; \draw (-1.38, -0.11) node{$-b_n$};
    \end{tikzpicture} \\[\abovecaptionskip]
    \small (b) Represented in $\mathbb{C}$
  \end{tabular}

  \caption{A candidate cos-efficient sequence of quadratic differentials is pictured here. Each quadratic differential has 6 poles and 2 zeroes, and mass $4$ with the majority of its mass concentrated in a small neighbourhood of $0$. The interpretation of the polygonal and plane representation is similar to Figure \ref{fig:quad_diff}.}\label{fig:quad_diff_with_cp}
\end{figure}

The sequence of quadratic differentials in Example \ref{example_eff_qd} is an example that can be encountered in \cite{expthursmaps} because the exponential map locally behaves like an affine map. In contrast, cosine locally behaves as either a degree two map or an affine map, according to whether a critical point is or is not present. Pushing forward a quadratic differential in general involves a sum over preimages of a point in the codomain (see Section \ref{pushing_forward_subsection} for the case of cosine). As a result, when constructing quadratic differentials that push forward with very little loss of mass, it is more likely for cancellation to occur in this sum if there is more than one term (i.e., the map is not locally affine). Therefore, our initial intuition led us to think that if a sequence of quadratic differentials is cos-efficient, then its mass does not concentrate near a critical point. However, a unique property of cosine is that it is an even function and so it is possible to build sequences of quadratic differentials with cos-symmetric poles that appear to push forward with a negligible loss of mass near a critical point. The next example is one such construction. 

\begin{example}\label{example_eff_qd_with_cp}
    Figure \ref{fig:quad_diff_with_cp} shows a sequence of quadratic differentials, represented as a polygon and also in $\mathbb{C}$. Each quadratic differential in this sequence has cos-symmetric poles in the bounded component of the annulus that separates $0$ from $1$ and $\infty$. As in Example \ref{example_eff_qd}, as $n \to \infty$, the annulus separating $1$ and $\infty$ from the other poles, grows in modulus and squeezes down the majority of the mass of the quadratic differential into a very small disk containing $-b_n$, $-a_n$, $a_n$, and $b_n$. Thus, $a_n$ and $b_n$ are sufficiently close to $0$ and once again, the scale of the diagram is out of proportion in how close poles are to each other. The critical point $0$ is not a pole, but where it is relative to the poles is important because most of the mass is concentrated in a very small disk with $0$ in the center of the poles $-b_n$, $-a_n$, $a_n$, and $b_n$. One may think of each quadratic differential in this sequence as 
    \[
    \frac{p(z)}{(z^2-a_n^2)(z^2-b_n^2)(z-1)}dz^2,
    \]
    where $p(z)$ is a quadratic polynomial so that $\infty$ is indeed a simple pole. Observe that in the small disk where the majority of the mass is concentrated, cosine is locally a degree two map. The sum that appears in the definition of the push-forward by cosine does not have terms cancelling completely because cosine is an even function. For example, if we compute this push-forward using just the first two terms of the power series centered at $0$ for cosine, then assuming $g(z) := 1-\frac{z^2}{2}$ and $w = g(z)$, we obtain
    \[
    g_*q = \frac{dw^2}{2-2w}\left(q(\sqrt{2-2w}) + q(-\sqrt{2-2w}) \right).
    \]
    If $q$ has a formula like the quadratic differential above, the terms in the sum are close to adding perfectly due to the symmetry of the poles of $q$ near $0$. 
    Therefore, this is a candidate for a sequence of quadratic differentials that is cos-efficient. 
\end{example}

\begin{remark}
    If Example \ref{example_eff_qd_with_cp} is rigorously proved to be cos-efficient, then it may be possible that perturbing the poles keeps the quadratic differential cos-efficient. That is, we replace $-b_n, -a_n, a_n$, and $b_n$ with $-b_n+\varepsilon_{1,n}, -a_n+\varepsilon_{2,n}, a_n+\varepsilon_{3,n}$, and $b_n+\varepsilon_{4,n}$, respectively, for $\varepsilon_{1,n}, \varepsilon_{2,n}, \varepsilon_{3,n}$, $\varepsilon_{4,n}$ with sufficiently small modulus. The possibility of sequences of such quadratic differentials being cos-efficient is precisely why the \hyperref[condition:mass_condition]{mass condition} accounts for mass concentrated near critical points with poles that are not necessarily cos-symmetric.
\end{remark}

\section{Adaptations of Exponential Results to Cosine}
\label{adaptations_to_cosine_section}

In this section, we produce similar results as in [\cite{expthursmaps}, Section 6] modified for cosine, specifically Proposition 3.2, Proposition 6.1, and Proposition 6.2. Much of these results have a similar analysis to the exponential case, with the differences being in accounting for the existence of critical points. These results have to do with sequences of quadratic differentials so they will split into the two limit model cases as described in Definition \ref{limit_model_def}. We refer the reader to Figure \ref{change_of_coords_fig} throughout this section as a way of recalling what plane the different objects are thought of in.

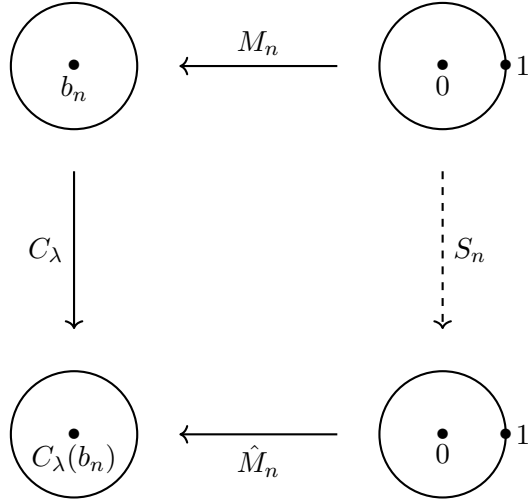
\begin{figure}
    \centering
    \begin{tikzpicture}[ scale = 1.4 ]
        \draw[thick] (2,2) circle[radius=0.6];
        \draw[thick] (-1.5,2) circle[radius=0.6];
        \draw[thick] (-1.5,-1.5) circle[radius=0.6];
        \draw[thick] (2,-1.5) circle[radius=0.6];
    
        \draw[thick, ->] (1,2) -- (-0.5,2) node[midway, above]{$M_n$};
        \draw[thick, ->] (-1.5,1) -- (-1.5,-0.5) node[midway, left]{$C_\lambda$};
        \draw[thick, ->] (1,-1.5) -- (-0.5, -1.5) node[midway, below]{$\hat{M}_n$};
        \draw[thick, dashed, ->] (2,1) -- (2,-0.5) node[midway, right]{$S_n$};
    
        \draw (2.6,2) node{$\bullet$} node[right]{$1$};
        \draw (2,2) node{$\bullet$} node[below]{$0$};
        \draw (-1.5,2) node{$\bullet$} node[below]{$b_n$};
        \draw (-1.5,-1.5) node{$\bullet$} node[below]{$C_\lambda(b_n)$};
        \draw (2.6,-1.5) node{$\bullet$} node[right]{$1$};
        \draw (2,-1.5) node{$\bullet$} node[below]{$0$};
    \end{tikzpicture}
    \caption{The scalings $\hat{M}_n$ in Proposition \ref{cos-eff-int-pushforwards} and Proposition \ref{cos_eff_annular_pushfowards} are chosen with the intent of $S_n \rightarrow \text{id}$ as $n \to \infty$ uniformly on compact subsets of $\mathbb{C}$. To do so in the context of cos-efficient quadratic differentials, if $w = M_n(z) = a_nz+b_n$, $M_n$ sends the unit disk in $\mathbb{C}$ to a small disk centered at $b_n$ in the $w$-plane. The importance of this is that if $b_n$ is not a critical point of $C_\lambda$, then  $C_\lambda$ acts like an affine map and sends this small disk to a small disk. One then forces the scaling $\hat{M}_n$ to be chosen so that $S_n(D_1(0))$ looks generally like $D_1(0)$ for large $n$.}
    \label{change_of_coords_fig}
\end{figure}

\subsection{Thick case}

\begin{prop}[Cos-efficient integrable push-forwards]\label{cos-eff-int-pushforwards}
    Let $(q_n)$ be a sequence of measurable quadratic differentials on $\mathbb{C}$. Suppose $(q_n)$ has limit model $q$, where $q$ is a meromorphic quadratic differential with $0 < \norm{q}_\mathbb{C} < \infty$ and scalings  $M_n(z) = a_n z + b_n$. If $(q_n)$ is cos-efficient, then 
    \begin{enumerate}[(1)]
        \item $a_n \to 0$ as $n \to \infty$ and 
        \item if the poles of $(q_n)$ do not converge to critical points of cosine as $n \to \infty$, then $(\cos_*q_n)$ has limit model $q$ with scalings $\hat{M}_n(z) = -a_n\sin(b_n)z + \cos(b_n)$. 
    \end{enumerate}
\end{prop}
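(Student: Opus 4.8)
The plan is to reduce to a convenient normalization and then prove the two assertions separately: (1) is a ``the window does not blow up'' statement that I will prove by contradiction using the absolute-efficiency lemma, and (2) is essentially a linearization of $\cos\circ M_n$. By Remark \ref{b_n_is_a_pole} I may assume $q$ has a pole at $0$, so that each $b_n$ is a pole of $q_n$. Since an affine change of coordinates preserves mass, $\normc{q_n}=\normc{(M_n)^*q_n}\to\normc{q}$, and cos-efficiency upgrades this to $\normc{\cos_* q_n}\to\normc{q}$. I will also use throughout that $\cos_*$ never increases mass, so it is a weak $L^1$-contraction and in particular continuous: if $p_n\to p$ in mass then $\normc{\cos_* p_n-\cos_* p}\le\normc{p_n-p}\to 0$.

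For (1), suppose $a_n\not\to 0$ and pass to a subsequence with $|a_n|$ bounded below. Using the $2\pi$-periodicity of $\cos$ I may reduce $\operatorname{Re}(b_n)$ modulo $2\pi$ without changing $\cos_* q_n$, and pass to a further subsequence so that $a_n,b_n$ converge in $\mathbb{C}\cup\{\infty\}$. In the principal case $a_n\to a_\infty\in\mathbb{C}^*$ and $b_n\to b_\infty\in\mathbb{C}$, the maps $M_n$ converge to a nondegenerate affine map $M_\infty$, so $q_n=(M_n^{-1})^*(M_n)^*q_n\to (M_\infty^{-1})^*q=:\tilde q$ in mass. By $L^1$-continuity of $\cos_*$ together with the mass limits above, $\normc{\cos_*\tilde q}=\normc{\tilde q}$, i.e. $\tilde q$ would have absolutely efficient push-forward by $\cos$, contradicting Lemma \ref{cos-pushforward-not-abs-eff}; hence $a_n\to 0$. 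The hard part of (1) is the degenerate subcases excluded here, namely $\operatorname{Im}(b_n)\to\pm\infty$ and $|a_n|\to\infty$. When $\operatorname{Im}(b_n)\to\pm\infty$ one has $\cos(w)=\tfrac12 e^{\mp iw}(1+o(1))$ uniformly on the window, so $\cos$ behaves like a rescaled exponential and the claim reduces to the exponential analysis of \cite{expthursmaps}. The genuine obstacle is $|a_n|\to\infty$: then $M_n(K)$ has diameter tending to infinity and spans many periods of $\cos$, so the push-forward sum over $\cos^{-1}$ runs over many sheets, and I expect to rule this out by showing that such folding forces a definite loss of mass, i.e. that the coherent addition possible for $a_n\to 0$ (as in the cos-symmetric Example \ref{example_eff_qd_with_cp}) cannot persist when the window is large. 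Making this cancellation estimate quantitative is the crux.

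For (2) I now have $a_n\to 0$, and by hypothesis the poles, in particular $b_n$, stay away from the critical set $\pi\mathbb{Z}$, so $\sin(b_n)\ne 0$. I take $\hat M_n(z)=-a_n\sin(b_n)z+\cos(b_n)$, which is exactly the first-order Taylor polynomial of $z\mapsto\cos(a_nz+b_n)$ at $z=0$. Setting $S_n:=\hat M_n^{-1}\circ\cos\circ M_n$, a direct expansion gives $S_n(z)=\dfrac{\cos(a_nz+b_n)-\cos(b_n)}{-a_n\sin(b_n)}=z+O\!\big(\tfrac{\cos(b_n)}{\sin(b_n)}a_nz^2\big)$, which tends to the identity uniformly on compacta because $a_n\to 0$ while $\cos(b_n)/\sin(b_n)$ stays bounded in the relevant regime (this is the picture in Figure \ref{change_of_coords_fig}). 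Since $a_n\to 0$ the window $M_n(K)$ shrinks to $b_n$, and since $b_n\not\equiv k\pi$ we have $b_n\not\equiv -b_n\pmod{2\pi}$, so for large $n$ the window is disjoint from its images under $w\mapsto -w$ and $w\mapsto w+2\pi k$; hence $\cos$ is injective on it. On this single sheet $\cos=\hat M_n\circ S_n\circ M_n^{-1}$, so $(\hat M_n)^*\cos_* q_n=(S_n)_*\big((M_n)^*q_n\big)$ up to contributions from the tail of $q$ and from other sheets.

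I then conclude (2) by a limiting argument: $(M_n)^*q_n\to q$ and $S_n\to\operatorname{id}$ give $(S_n)_*\big((M_n)^*q_n\big)\to q$ in mass; the tail contribution is controlled by integrability of $q$, since its mass outside large disks is arbitrarily small, and the other-sheet contribution is negligible precisely because injectivity on the window rules out the coherent addition that the even symmetry of $\cos$ could otherwise produce near a critical point. Therefore $(\hat M_n)^*\cos_* q_n\to q$, i.e. $(\cos_* q_n)$ has limit model $q$ with scalings $\hat M_n$, and the mass bookkeeping is consistent because $\normc{(\hat M_n)^*\cos_* q_n}=\normc{\cos_* q_n}\to\normc q$. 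The one substantial difficulty in the whole argument is the $|a_n|\to\infty$ case of (1) flagged above; everything else is a routine linearization and $L^1$-continuity computation.
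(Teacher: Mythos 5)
Your treatment of the bounded case of (1) and of part (2) follows essentially the same route as the paper: the bounded case is handled by passing to a convergent subsequence of $(a_n,b_n)$, invoking continuity of the push-forward in $L^1$ and Lemma \ref{cos-pushforward-not-abs-eff}, and part (2) is the same linearization $S_n=\hat M_n^{-1}\circ\cos\circ M_n\to\mathrm{id}$ (note that the identity $(\hat M_n)^*\cos_*q_n=(S_n)_*\big((M_n)^*q_n\big)$ is exact, since $S_n$ is an entire map and its push-forward already sums over all sheets, so your worry about ``other-sheet contributions'' is moot; the genuine care needed there is controlling $(S_n)_*q$ near the poles of $q$, which the paper does with a small neighbourhood $W$ of the poles). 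The paper also first splits $q_n$ into pieces $p_n^j$ via its decomposition theorems and argues on a piece carrying a definite share of mass, but that is a routine difference.

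However, you have correctly identified, and left open, a genuine gap: the case $|a_n|\to\infty$. You propose to rule it out by a quantitative cancellation estimate showing that folding over many periods forces a definite loss of mass, and you call this ``the crux.'' No such estimate is needed, and the paper avoids it entirely with a semiconjugacy trick: write $a_n=2^{k_n}a_n^\sharp$ with $k_n\in\mathbb{N}$ and $1\le|a_n^\sharp|<2$, and let $Q(z)=2z^2-1$, so that $Q(\cos w)=\cos(2w)$ and hence $Q^{\circ k_n}\circ\cos\circ M_n^\sharp=\cos\circ M_n$ where $M_n^\sharp(z)=a_n^\sharp z+b/2^{k_n}$. Since push-forward under the polynomial $Q^{\circ k_n}$ does not increase the $L^1$ norm, one gets
\[
\frac{\normc{\cos_*q_n}}{\normc{q_n}}\le\frac{\normc{\cos_*(M_n^\sharp)_*(M_n)^*q_n}}{\normc{(M_n^\sharp)_*(M_n)^*q_n}},
\]
which reduces the unbounded case to the already-settled case of scalings bounded in $\mathbb{C}^*$. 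Without this (or an equivalent) reduction, your proof of (1) is incomplete, and since (2) presupposes (1), the proposal as written does not establish the proposition. A secondary soft spot is your reduction of $b_n$ to a bounded set: reducing $\operatorname{Re}(b_n)$ modulo $2\pi$ is harmless, but the case $\operatorname{Im}(b_n)\to\pm\infty$ is only sketched via exponential asymptotics; the paper is also terse here, so I flag it only as a point to make precise rather than as an error.
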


\begin{proof}
    The proof is similar to [\cite{expthursmaps}, Proposition 6.1]. Beginning with (1), the proof is done by contradiction according to whether $a_n$ is bounded away from 0 and $\infty$ or $a_n \to \infty$. If either of these cases are assumed, $(q_n)$ cannot be cos-efficient which will be the required contradiction. By the Decomposition of Quadratic Differentials [\cite{expthursmaps}, Theorem 4.1], for every $n$ we can write
    \[
    q_n = \sum_{j} p_n^j,
    \]
    where each $p_n^j$ is a global integrable meromorphic quadratic differential and by the Decomposition of Mass Theorem [\cite{expthursmaps}, Theorem 5.2], it suffices to consider the $p_n^j$ with a definite share of the mass (see Remark \ref{each_piece_cos_eff_remark}). So from here onward we consider such $p_n^j$ in our estimates. 

    Because the limit model of $p_n^j$ is $q$, $\lim_{n \to \infty} \normc{(M_n)^*p_n^j - q} = 0$. So given $\varepsilon > 0$, there exists $N > 0$ such that if $n > N$, then $\normc{(M_n)^*p_n^j - q} < \varepsilon \normc{q}$. Then, 
    \[
    \normc{q} - \normc{(M_n)^*p_n^j} \leq \normc{(M_n)^*p_n^j-q} < \varepsilon \normc{q}
    \]
    so that
    \[
    \normc{q} < \frac{\normc{(M_n)^*p_n^j}}{1-\varepsilon}.
    \]
    Unlike the exponential case, translations may affect cos-efficiency. However, it suffices to modify $(q_n)$ so that the $b_n$ are bounded. Then, there exists a subsequence of the $b_n$ that converges to $b \in \mathbb{C}$. Pass to this subsequence and continue the notation $b_n$ for ease of notation.

    Suppose $a_n$ is bounded in $\mathbb{C}^*$. The special case of trivial scalings is treated in [\cite{expthursmaps}, Proposition 6.1] and its proof is identical for cosine. For the nontrivial scalings case, boundedness implies the existence of a convergent subsequence of $a_n$ that converges to $a \in \mathbb{C}^*$. We keep the same notation for $a_n$ as we pass to this subsequence. Let $f_{n}(z) = \cos\left(a_nz + b_n\right)$ and $f(z) = \cos\left(az + b\right)$. Then, $f_n$ converges uniformly to $f$ on compact subsets of $\mathbb{C}$ so if $N$ is chosen sufficiently large and $n > N$, we have $\normc{(f_n)_*q - f_*q} < \varepsilon\normc{q}$.
    By Lemma \ref{cos-pushforward-not-abs-eff}, there exists $c_q \in (0,1)$ such that $\normc{f_*q} \leq c_q \normc{q}$. For $n > N$, 
    \begin{align*}
        \normc{\cos_*p_n^j} &= \normc{(f_n)_*(M_n)^*p_n^j} \\
        &= \normc{(f_n)_*((M_n)^*p_n^j - q + q)} \\
        & \leq \normc{(f_n)_*((M_n)^*p_n^j-q)} + \normc{(f_n)_*q - f_*q + f_*q} \\
        &\leq \normc{(M_n)^*p_n^j - q} + \normc{(f_n)_*q - f_*q} + \normc{f_*q} \\
        &< \varepsilon\normc{q} + \varepsilon\normc{q} + c_q\normc{q} \\
        &\leq \frac{2\varepsilon + c_q}{1-\varepsilon}\normc{(M_n)^*p_n^j} \\
        &= \frac{2\varepsilon + c_q}{1-\varepsilon}\normc{p_n^j}.
    \end{align*}
    Choosing $\varepsilon$ sufficiently small, it follows that $(p_n^j)$ is not cos-efficient hence neither is $(q_n)$ contradicting our initial assumption. This concludes the case of bounded $a_n$ in $\mathbb{C}^*$. 

    Now suppose $a_n \to \infty$. Since a subsequence of $b_n$ converges to $b \in \mathbb{C}$, it suffices to assume $b_n = b$ for $n$ sufficiently large. We can rewrite $a_n$ such that $a_n = 2^{k_n}a_n^\sharp$ where $k_n \in \mathbb{N}$. This is so that $1 \leq |a_n^\sharp| < 2$ for $n$ sufficiently large. So $a_n^\sharp$ is bounded in $\mathbb{C}^*$ for $n$ sufficiently large. Take $Q(z) = 2z^2-1$, and $M_n^\sharp(z) := a_n^\sharp z + \frac{b}{2^{k_n}}$. Then,
    \[
    Q \circ \cos \circ M_n^\sharp(z) = Q\left(\cos\left(a_n^\sharp z +\frac{b}{2^{k_n}}\right)\right) = \cos\left(2a_n^\sharp z + \frac{b}{2^{k_n-1}} \right)
    \]
    hence
    $Q^{k_n} \circ \cos \circ M_n^\sharp(z) = \cos \circ M_n(z)$. Let $r_n^j := (M_n)^*p_n^j$. Then,
    \[
    \normc{\cos_*p_n^j} = \normc{\cos_*(M_n)_*r_n^j} = \normc{(Q^{\circ k_n})_*\cos_*(M_n^\sharp)_*r_n^j} \leq \normc{\cos_*(M_n^\sharp)_*r_n^j}
    \]
    and so 
    \[
    \frac{\normc{\cos_*p_n^j}}{\normc{p_n^j}} \leq \frac{\normc{\cos_*(M_n^\sharp)_*r_n^j}}{\normc{(M_n^\sharp)_*r_n^j}}.
    \]
 
    If $((M_n^\sharp)_*r_n^j)$ is cos-efficient, we are in the bounded case of the proof in which case we can reach the same contradiction. If $((M_n^\sharp)_*r_n^j)$ is not cos-efficient, then neither is $(p_n^j)$ by the inequality above and hence neither is $(q_n)$, a contradiction. Therefore, we may conclude that as $n \to \infty$, $a_n \to 0$ so (1) is proved. 
    
    Now we prove (2). By our hypothesis and Remark \ref{b_n_is_a_pole}, it follows that $b_n$ does not converge to $k\pi$ for any $k \in \mathbb{Z}$. We must show that $\normc{(\hat{M_n})^*(\cos_*p_n^j) - q} \to 0$ as $n \to \infty$. Start by defining $S_n := (\hat{M_n})^{-1} \circ \cos \circ M_n$. Then, 
    \[
    S_n(z) = \frac{\cos(a_nz+b_n) - \cos(b_n)}{-a_n\sin(b_n)} = z \cdot \frac{\cos(b_n+a_nz) - \cos(b_n)}{-a_nz} \cdot \frac{1}{\sin(b_n)}
    \]
    so as $n \to \infty$, $a_n \to 0$ and $b_n \to b$, where $b \neq k\pi$ for any $k \in \mathbb{Z}$. Therefore, $S_n(z) \to z$ as $n \to \infty$ uniformly on compact subsets of $\mathbb{C}$. The remainder of the proof does not rely on any properties of cosine so is identical to the exponential proof; we include it for completeness. Let $\varepsilon > 0$. For $N$ sufficiently large, it then follows that for $n>N$ we have $\normc{(M_n)^*p_n^j - q} < \frac{\varepsilon}{2}$. Next, \cite{expthursmaps} uses the following statement: if $h(z)$ is a meromorphic function in $D_1(0)$ with at most one simple pole in $D_{1/2}(0)$ and no poles in the $D_1(0) \setminus D_{1/2}(0)$, then for every $\varepsilon > 0$ there exists $\delta > 0$ such that $\norm{h}_{D_\delta(0)} \leq \varepsilon\norm{h}_{D_1(0)}$. In this situation, we can utilize this statement for each pole of $q$ (along with $\infty$) to build a neighbourhood $W$ of the poles of $q$ and $\infty$ that is dependent on some $\delta$ such that $\norm{q}_W < \frac{\varepsilon}{6}$ and $\norm{(S_n)_*q}_W < \frac{\varepsilon}{6}$ for sufficiently large $n$. This is done by using a change of coordinate on each pole (along with $\infty$) $p_i$ of $q$ to transport it to 0 as well as the fact that the poles of $(S_n)_*q$ converge to the poles of $q$ (along with $\infty$). Lastly, for $n$ sufficiently large, as $n \to \infty$, $S_n(z) \to z$ uniformly on $\hat{\mathbb{C}} \setminus W$ hence $\norm{(S_n)_*q - q}_{\hat{\mathbb{C}} \setminus W} < \frac{\varepsilon}{6}$. Combining all of these estimates and assuming that $N$ is sufficiently large, we have for $n > N$, 
    \begin{align*}
        \normc{(\hat{M_n})^*(\cos_*p_n^j) - q} &= \normc{(S_n)_*(M_n)^*p_n^j - q} \\
        &\leq \normc{(S_n)_*((M_n)^*p_n^j - q)} + \normc{(S_n)_*q - q} \\
        &\leq \normc{(M_n)^*p_n^j - q} + \norm{(S_n)_*q}_W +\norm{q}_W + \norm{(S_n)_*q - q}_{\hat{\mathbb{C}} \setminus W} \\
        &< \frac{\varepsilon}{2} + \frac{\varepsilon}{6} + \frac{\varepsilon}{6} + \frac{\varepsilon}{6} \\
        &= \varepsilon. 
    \end{align*}
\end{proof}

\begin{remark}
    In the above proof, note that $S_n$ has critical points at $\frac{k\pi-b_n}{a_n}$ for every $k \in \mathbb{Z}$ and thus, the translations $b_n$ should not approach a critical point of cosine so that the critical points of $S_n$ tend to infinity as $n$ tends to infinity. This ensures that $S_n$ will converge to the identity on compact subsets of $\mathbb{C}$.
\end{remark}

\subsection{Thin case}

\begin{lemma}[Cos-efficient annular distribution of mass]\label{cos-efficient-annular-dist-mass}
    For the quadratic differential $q:= \frac{dz^2}{z^2}$ and the annulus $A(r, R):= \{z \in \mathbb{C} : r < |z| < R \}$ (where $r < R$), there exists $\widetilde{\alpha} \in (0,1)$ such that for all $r, R$ satisfying $r \geq \pi$ and $R \geq 3r$, $\normc{\cos_*(q|_{A(r,R)})} \leq \widetilde{\alpha}\normc{q|_{A(r,R)}}$. 
\end{lemma}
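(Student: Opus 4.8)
The plan is to transport everything to the image ($w$-)plane, reduce the inequality to a comparison of two explicit integrals, and then extract a definite loss of mass from a region on which the preimages of $\cos$ are forced to spread out in argument. First I would record the two masses. Writing $q=dz^2/z^2$, so $|q|=|z|^{-2}\,dx\,dy$, polar coordinates give $\normc{q|_{A(r,R)}}=\int_{A(r,R)}|z|^{-2}\,dx\,dy=2\pi\log(R/r)$. For the push-forward I would use the formula from Section \ref{pushing_forward_subsection}: since $\cos'(z)=-\sin z$ and $\sin^2 z=1-w^2$, the restricted push-forward is $\cos_*(q|_A)(w)=\frac{1}{1-w^2}S(w)\,dw^2$ with $S(w):=\sum_{z\in\cos^{-1}(w)\cap A}z^{-2}$, and the change of variables $w=\cos z$ (with Jacobian factor $du\,dv=|\sin z|^2\,dx\,dy=|1-w^2|\,dx\,dy$ locally) yields $\normc{\cos_*(q|_A)}=\int_{\mathbb C}\frac{|S(w)|}{|1-w^2|}\,du\,dv$ and $\normc{q|_A}=\int_{\mathbb C}\frac{T(w)}{|1-w^2|}\,du\,dv$, where $T(w):=\sum_{z\in\cos^{-1}(w)\cap A}|z|^{-2}\ge|S(w)|$. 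Thus it suffices to bound the total defect $\int_{\mathbb C}\frac{T(w)-|S(w)|}{|1-w^2|}\,du\,dv$ below by a definite fraction of $\normc{q|_A}$.

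The key step is a uniform pointwise cancellation estimate on a carefully chosen domain sector. I would fix the narrow symmetric sector $G:=\{z\in A:\ |\,\operatorname{arg} z\mp\tfrac{\pi}{2}\,|<\psi_0,\ |z|<\kappa R\}$ for a small absolute half-angle $\psi_0$ and a suitable $\kappa\in(1/3,1/\sqrt 2)$. Because the measure $|z|^{-2}\,dx\,dy=\rho^{-1}\,d\rho\,d\theta$ is uniform in angle and the radial condition $R\ge 3r$ forces $\log(\kappa R/r)$ to be a definite fraction of $\log(R/r)$, the sector $G$ carries a definite fraction $\beta_0>0$ of $\normc{q|_A}$, uniformly in $r,R$. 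Given any preimage $z_0\in G$, I would produce a partner preimage of the \emph{same} $w$ by a horizontal translation, $z_1:=z_0+2\pi k_1$ with $k_1$ the nearest integer to $\operatorname{Im}(z_0)/2\pi$; periodicity of $\cos$ makes $z_1$ again a preimage of $w$, and the hypotheses $r\ge\pi$ (so $\operatorname{Im} z_0$ exceeds $\pi$ and $k_1\ge 1$, keeping $z_1\ne z_0$ and away from the critical point $0$) and $|z_0|<\kappa R$ (so $|z_1|\approx\sqrt2\,|z_0|<R$) guarantee $z_1\in A$ and that $z_1$ sits near the $45^\circ$-ray, hence outside $G$. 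A short computation then shows $\operatorname{arg}(z_1/z_0)$ is bounded away from $0$ (near $\tfrac{\pi}{2}$) and the modulus ratio $|z_1|/|z_0|$ is bounded above and below, so the two contributions $z_0^{-2}$ and $z_1^{-2}$ have an angle bounded away from $0$ together with comparable moduli, giving $|z_0^{-2}+z_1^{-2}|\le(1-c_1)(|z_0|^{-2}+|z_1|^{-2})$ for an absolute constant $c_1>0$.

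To globalize, I would choose $k_1$ consistently so that distinct $z_0\in G$ (which differ by multiples of $2\pi$ and share the same height $\operatorname{Im} z_0$) receive distinct translates $z_1$, all lying outside $G$; this makes the pairs $\{z_0,z_1\}$ pairwise disjoint. Grouping $S(w)$ into these pairs plus the remaining preimages and applying the triangle inequality gives $T(w)-|S(w)|\ge c_1\,N(w)$, where $N(w):=\sum_{z_0\in\cos^{-1}(w)\cap G}|z_0|^{-2}$. Integrating against $|1-w^2|^{-1}$ and reversing the change of variables over $G$ yields $\int_{\mathbb C}\frac{N(w)}{|1-w^2|}\,du\,dv=\int_G|z|^{-2}\,dx\,dy=\operatorname{mass}(G)\ge\beta_0\normc{q|_A}$. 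Combining, $\normc{\cos_*(q|_A)}\le\normc{q|_A}-c_1\beta_0\normc{q|_A}$, so $\widetilde\alpha:=1-c_1\beta_0\in(0,1)$ works for all admissible $r,R$.

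The main obstacle is the uniformity of the pairwise constant $c_1$. One must verify, across the whole sector $G$ and all $r\ge\pi$, $R\ge 3r$, that both the angle between $z_0^{-2}$ and $z_1^{-2}$ stays bounded away from $0$ and the modulus ratio stays bounded, including the borderline regime $r=\pi$, where the rounded translate sits nearer the $27^\circ$-ray and the modulus ratio degrades to roughly $5$ rather than $2$; the estimate survives there but gives a smaller $c_1$. The surrounding geometry—checking $z_1\in A\setminus G$ for every $z_0\in G$, choosing $\psi_0$ and $\kappa$ so that $\operatorname{mass}(G)$ remains a definite fraction even when $R=3r$, and confirming the pairing is genuinely disjoint—is elementary but is exactly where the two hypotheses $r\ge\pi$ and $R\ge 3r$ are used, the former to avoid the critical point at the origin and the latter to provide the radial room that lets the translate fan out to a definite angle.
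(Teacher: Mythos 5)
Your proposal is correct in its overall strategy, but it takes a genuinely different route from the paper. The paper's proof is much softer: it first observes cancellation on the single compact annulus $A(\pi,3\pi)$ by exhibiting the pair of preimages $1+i\pi$ and $1+2\pi+i\pi$ of a common value (whose contributions $z^{-2}$ are not positively parallel, so strict cancellation persists on a neighbourhood and integrates to some $\alpha<1$); it then propagates this constant to every annulus $A(3^i r,3^{i+1}r)$ using the triple-angle identity $\cos(3^i z)=Q^{\circ i}(\cos z)$ with $Q(z)=4z^3-3z$, together with the fact that push-forward by $Q$ never increases mass; finally it obtains uniformity over $r\in[\pi,3\pi]$ by continuity and compactness. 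You instead replace both the Chebyshev self-similarity trick and the compactness step by a direct quantitative pairing $z_0\mapsto z_1=z_0+2\pi k_1$ of preimages lying in a sector near the imaginary axis that carries a definite fraction of the mass, extracting a pointwise defect $T(w)-\lvert S(w)\rvert\ge c_1 N(w)$ and integrating. What the paper's route buys is brevity and freedom from uniform pointwise estimates, at the cost of a non-explicit $\widetilde{\alpha}$; what yours buys is an in-principle explicit constant and independence from the triple-angle identity, so it would transfer to maps without that algebraic self-similarity (it also avoids the paper's implicit reduction to radii of the form $3^k r$).

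One detail needs repair before the argument is airtight. From $z_0\in G$ you only get $\operatorname{Im}(z_0)\ge \lvert z_0\rvert\cos\psi_0>\pi\cos\psi_0$, which can be \emph{less} than $\pi$, so the nearest integer to $\operatorname{Im}(z_0)/(2\pi)$ can be $0$ and your partner degenerates to $z_1=z_0$; your parenthetical claim that $r\ge\pi$ forces $k_1\ge 1$ is therefore not quite right. The fix is to set $k_1:=\max\bigl\{1,\ \text{the nearest integer to }\operatorname{Im}(z_0)/(2\pi)\bigr\}$; your own computation in the borderline regime $\operatorname{Im}(z_0)\approx\pi$, $k_1=1$ already shows the angular separation and modulus comparability (and hence the cancellation) survive there, just with a smaller $c_1$. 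Relatedly, the bound $\lvert z_1\rvert\ge\operatorname{Im}(z_0)$ alone only gives $\lvert z_1\rvert>r\cos\psi_0$, so verifying $z_1\in A$ requires either shrinking $G$ to $\lvert z\rvert>r/\cos\psi_0$ (harmless for the mass fraction $\beta_0$) or using both coordinates of $z_1$; this is elementary but should be stated.
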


\begin{proof}
   The proof is an adaptation of the ``toy example" of [\cite{expthursmaps}, Proposition 6.2], adjusted for cosine. Considering the points $1+i\pi, 1+2\pi + i\pi \in A(\pi, 3\pi)$, it follows that on this annulus there is cancellation of mass. This implies that there exists $\alpha < 1$ such that we have the inequality $\normc{\cos_*(q|_{A(\pi,3\pi)})} \leq \alpha\normc{q|_{A(\pi,3\pi)}}$. Next, let $F_i(z) = 3^iz$ (where $i \in \mathbb{N}$) and $Q(z) = 4z^3-3z$. For $r \geq \pi$, $\cos \circ F_i(z) = Q^{\circ i} \circ \cos(z)$ and so
   \[
        \normc{\cos_*(q|_{A(3^ir, 3^{i+1}r)})} = \normc{\cos_*(F_i)_*(q|_{A(r,3r)})} 
        = \normc{(Q^{\circ i})_*\cos_*(q|_{A(r, 3r)})} 
        \leq \normc{\cos_*(q|_{A(r,3r)})}.
    \]
    We also have $\normc{q|_{A{(3^ir, 3^{i+1}r)}}} = 2\pi\ln(3) = \normc{q|_{A(r,3r)}}$.
    This implies that 
    \[
    \frac{\normc{\cos_*(q|_{A(3^ir, 3^{i+1}r)})}}{\normc{q|_{A{(3^ir, 3^{i+1}r)}}}} \leq \frac{\normc{\cos_*(q|_{A(r,3r)})}}{\normc{q|_{A(r,3r)}}}
    \]
    and for $k \in \mathbb{N}$,
    \[
    \frac{\normc{\cos_*(q|_{A(r,3^kr)})}}{\normc{q|_{A(r,3^kr)}}} \leq \frac{\normc{\cos_*(q|_{A(r,3r)})}}{\normc{q|_{A(r,3r)}}}.
    \]
    This inequality says that the majority of the mass of the push-forward of $q$ by cosine on $A(r, R)$, where $r \geq \pi$ and $R \geq 3r$, is concentrated in $A(r, 3r)$. We can also assume $r \leq 3\pi$. As in the special case of the annulus $A(\pi, 3\pi)$,  for each such $r \in [\pi, 3\pi]$, we can find $\alpha_r < 1$ such that $\normc{\cos_*(q|_{A(r,3r)})} \leq \alpha_r\normc{q|_{A(r,3r)}}$. Choose $\widetilde{\alpha} := \sup_{r \in [\pi, 3\pi]} \alpha_r$ and note that because $\alpha_r$ depends continuously on $r$ in a compact set, it follows that $\widetilde{\alpha} \in (0,1)$. Therefore, for all $r, R$ satisfying $r \geq \pi$ and $R \geq 3r$, $\normc{\cos_*(q|_{A(r,R)})} \leq \widetilde{\alpha}\normc{q|_{A(r,R)}}$.
\end{proof}

\begin{prop}[Cos-efficient annular push-forwards]\label{cos_eff_annular_pushfowards}
    Let $(q_n)$ be a sequence of measurable quadratic differentials on $\mathbb{C}$. Suppose $(q_n)$ has limit model $\frac{dz^2}{z^2}$ on annuli 
    \[
    A_n := \{z \in \mathbb{C}: r_n < |z-b_n| <R_n \}
    \]
    (where $0<r_n<R_n$ and $b_n \in \mathbb{C}$). If the sequence $(q_n)$ is cos-efficient, then
    \begin{enumerate}[(1)]
        \item there exists a sequence $R_n^* \in (r_n, R_n)$ that satisfies $R_n^* \to 0$ such that $\frac{\norm{q_n}_{A_n^*}}{\norm{q_n}_{A_n}} \to 1$, where $A_n^* := \{z \in \mathbb{C} : r_n < |z-b_n| < R_n^* \}$, and 
        \item if the poles of $(q_n)$ do not converge to critical points of cosine as $n \to \infty$, then $(\cos_*q_n)$ has limit model $\frac{dz^2}{z^2}$ on annuli 
        \[\{z \in \mathbb{C} : |\sin(b_n)|r_n < |z-\cos(b_n)| < |\sin(b_n)|R_n^*\}.
        \]
    \end{enumerate}
\end{prop}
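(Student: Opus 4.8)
The plan is to establish the two assertions in order, in close parallel with the thick case (Proposition \ref{cos-eff-int-pushforwards}) and with [\cite{expthursmaps}, Proposition 6.2], using Lemma \ref{cos-efficient-annular-dist-mass} as the quantitative source of mass loss. Throughout I work in the model coordinate given by the scaling $M_n(z) = z + b_n$, so that by Definition \ref{limit_model_def} the pullback $(M_n)^* q_n$ is $L^1$-close to $c_n \frac{dz^2}{z^2}$ on $\{r_n < |z| < R_n\}$, and by Remark \ref{b_n_is_a_pole} I may take $b_n$ to be a pole of $q_n$.

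For part (1) the conceptual backbone is that push-forward by a map that is injective on the support of a differential preserves its mass exactly, so any genuine loss of mass must come from the region where cosine fails to be injective. Near $b_n$ this failure is governed by the $2\pi$-periodicity of cosine: on the subannulus at scales below $\pi$ from $b_n$ cosine identifies no two points, whereas on the portion of $A_n$ at scales at least $\pi$ the presence of $2\pi$-translate pairs forces cancellation, which is exactly what Lemma \ref{cos-efficient-annular-dist-mass} quantifies. First I would rule out $r_n \geq \pi$ along a subsequence: in that case the whole annulus sits at scales where the lemma applies, giving $\normc{\cos_* q_n} \leq \widetilde{\alpha}\,\normc{q_n}$ with $\widetilde\alpha < 1$ and contradicting cos-efficiency. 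Hence $r_n < \pi$ eventually, and the same lemma shows that the mass carried by the outer part $\{\pi < |z - b_n| < R_n\}$ pushes forward with a definite deficit; cos-efficiency then forces this outer mass to be asymptotically negligible. Since $\frac{dz^2}{z^2}$ distributes mass uniformly on the logarithmic scale while the full modulus $\frac{1}{2\pi}\log(R_n/r_n)$ tends to infinity, negligibility of the outer part forces $\log(\pi/r_n) \sim \log(R_n/r_n) \to \infty$, i.e. $r_n \to 0$, with the mass of $\{r_n < |z - b_n| < \pi\}$ exhausting that of $A_n$. A diagonal choice of $R_n^* \in (r_n,\pi)$ with $\log(\pi/R_n^*) = o(\log(\pi/r_n))$ then yields $R_n^* \to 0$ together with $\normc{q_n|_{A_n^*}}/\normc{q_n|_{A_n}} \to 1$.

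For part (2) I would use that, with the poles bounded away from the critical set $\pi\mathbb{Z}$ (so $\sin(b_n)$ is bounded away from $0$), cosine is approximated by its linearization $w \mapsto \cos(b_n) - \sin(b_n)(w - b_n)$ on the shrinking disk $M_n(\{|z| < R_n^*\})$. Following the thick case I set $\hat M_n(z) = -\sin(b_n)z + \cos(b_n)$ and $S_n := \hat M_n^{-1} \circ \cos \circ M_n$, so that
\[
S_n(z) = z \cdot \frac{\cos(b_n + z) - \cos(b_n)}{-\sin(b_n)\,z}.
\]
Because $R_n^* \to 0$ confines $z$ to a shrinking neighbourhood of $0$ while $\sin(b_n)$ stays bounded away from $0$, the quotient tends to $1$ and $S_n \to \mathrm{id}$ uniformly on the relevant annuli. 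Writing $(\hat M_n)^*(\cos_* q_n) = (S_n)_*\big((M_n)^* q_n\big)$ and invoking the $L^1$-closeness of $(M_n)^* q_n$ to $c_n\frac{dz^2}{z^2}$ together with $S_n \to \mathrm{id}$, I would conclude that $\cos_* q_n$ has limit model $\frac{dz^2}{z^2}$ on the images $\hat M_n(A_n^*) = \{|\sin(b_n)|\,r_n < |\zeta - \cos(b_n)| < |\sin(b_n)|\,R_n^*\}$, as required.

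The main obstacle is transferring Lemma \ref{cos-efficient-annular-dist-mass} from annuli centred at the origin (a critical point) to the off-centre annuli $A_n$ around $b_n$. For the exponential this transfer is automatic, since translation in the source becomes multiplicative scaling in the target and the toy estimate applies verbatim; for cosine, however, $\cos(b_n + u) = \cos b_n \cos u - \sin b_n \sin u$, so an off-centre annulus is not a rescaled copy of a centred one, and in particular the triple-angle scaling $\cos(3\theta) = 4\cos^3\theta - 3\cos\theta$ underlying the lemma no longer commutes with translation by $b_n$. The cancellation mechanism from $2\pi$-translates is translation-invariant and should survive, but propagating the deficit across all scales $\geq \pi$ without the clean scaling relation is the delicate step; I expect to handle it by passing to a subsequence along which $b_n$ converges (treating the cases $b_n \to \infty$ and $b_n \to \pi\mathbb{Z}$ separately) and arguing by compactness, which is precisely where the position of $b_n$ relative to the critical set enters and links up with the role of the \hyperref[condition:mass_condition]{mass condition}.
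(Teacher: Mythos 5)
Your proposal follows essentially the same route as the paper: split $A_n$ at radius $\pi$ from $b_n$, use Lemma \ref{cos-efficient-annular-dist-mass} to show the outer part $\{\pi<|z-b_n|<R_n\}$ carries asymptotically none of the mass, diagonalize against the logarithmic mass distribution of $\frac{dz^2}{z^2}$ to produce $R_n^*\to 0$, and then linearize cosine at $b_n$ via $\hat M_n$ and $S_n\to\mathrm{id}$ for part (2). The ``obstacle'' you flag at the end --- that Lemma \ref{cos-efficient-annular-dist-mass} is stated for annuli centred at the origin and the triple-angle scaling $\cos(3z)=Q(\cos z)$ does not commute with translation by $b_n$ --- is a genuine subtlety, but it is not resolved in the paper either, whose proof of Claim 1 applies the lemma directly to $(M_n)_*\bigl(\frac{dz^2}{z^2}|_{M_n^{-1}(A_n'')}\bigr)$ supported on the annulus centred at $b_n$; on this point your write-up is, if anything, the more careful of the two.
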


\begin{proof}
   The proof is similar to [\cite{expthursmaps}, Proposition 6.2], adjusted for cosine. We begin with (1). Consider the cos-efficient sequence $(q_n)$. We will write $A_n = A_n' \cup A_n''$ where
   \[
        A_n' := \{z \in A_n : |z-b_n| < \pi \} \text{ and }
        A_n'' := \{z \in A_n : |z-b_n| > \pi \}.
    \]
    
    \underline{Claim 1}:
    We claim that 
    \[\lim_{n \to \infty} \frac{\norm{q_n}_{A_n''}}{\norm{q_n}_{A_n}} = 0.
    \]
    Suppose the claim is not true. Then, $\norm{q_n}_{A_n''}$ accounts for a definition proportion of the mass of $q_n$ on $A_n$. Now 
    \[
    \frac{\normc{\cos_*(q_n|_{A_n})}}{\normc{q_n|_{A_n}}} \leq \frac{\normc{\cos_*(q_n|_{A_n'})}}{\normc{q_n|_{A_n}}} + \frac{\normc{\cos_*(q_n|_{A_n''})}}{\normc{q_n|_{A_n}}}
    \]
    so because $(q_n)$ is cos-efficient, $\normc{\cos_*(q_n|_{A_n'})}$ and $\normc{\cos_*(q_n|_{A_n''})}$ should not lose a definite proportion of the mass each contributes to $\norm{q_n}_{A_n}$. To obtain a contradiction, we will show that this occurs on $A_n''$, by its design. Let $\varepsilon > 0$ be sufficiently small. By the limit model definition, $\lim_{n \to \infty} \norm{q_n}_{A_n} = \beta$ for some $\beta \neq 0$ and $\lim_{n \to \infty} \norm{M_n^*q_n}_{\mathbb{C} \setminus M_n^{-1}(A_n)} = 0$. Combining these facts, there exists $N > 0$ such that if $n > N$,
    \[
    \normc{M_n^*(q_n|_{A_n''}) - c_n\frac{dz^2}{z^2}\Big|_{M_n^{-1}(A_n'')}} < \varepsilon\cdot\beta \quad \text{and} \quad \abs{\norm{q_n}_{A_n} - \beta} < \varepsilon \cdot \beta. 
    \] 
    Using $\abs{\norm{q_n}_{A_n} - \beta} < \varepsilon \cdot \beta$, we have $\beta(1-\varepsilon) < \norm{q_n}_{A_n}$. With this estimate and the estimate $\normc{q_n|_{A_n''} - c_n(M_n)_*\left(\frac{dz^2}{z^2}|_{M_n^{-1}(A_n'')}\right)} < \varepsilon\cdot\beta$, it follows that
    \[
    \normc{c_n(M_n)_*\left(\frac{dz^2}{z^2}|_{M_n^{-1}(A_n'')}\right)} < \varepsilon\cdot\beta + \norm{q_n}_{A_n}.
    \]
    Along with Lemma \ref{cos-efficient-annular-dist-mass}, we have 
    \begin{align*}
    \frac{\normc{\cos_*(q_n|_{A_n''})}}{\normc{q_n|_{A_n}}}
        &= \frac{\normc{\cos_*\left(q_n|_{A_n''} - c_n(M_n)_*\left(\frac{dz^2}{z^2}|_{M_n^{-1}(A_n'')}\right) + c_n(M_n)_*\left(\frac{dz^2}{z^2}|_{M_n^{-1}(A_n'')}\right) \right)}}{\norm{q_n}_{A_n}} \\
        &\leq \frac{\normc{q_n|_{A_n''} - c_n(M_n)_*\left(\frac{dz^2}{z^2}|_{M_n^{-1}(A_n'')}\right)}}{\norm{q_n}_{A_n}} + \frac{\normc{\cos_*\left(c_n(M_n)_*\left(\frac{dz^2}{z^2}|_{M_n^{-1}(A_n'')}\right) \right)}}{\norm{q_n}_{A_n}} \\
        &< \frac{\varepsilon \cdot \beta}{\beta(1-\varepsilon)} + \frac{\normc{\cos_*\left((M_n)_*\left(\frac{dz^2}{z^2}|_{M_n^{-1}(A_n'')}\right) \right)}}{\normc{(M_n)_*\left(\frac{dz^2}{z^2}|_{M_n^{-1}(A_n'')} \right)}} \cdot \frac{\normc{c_n(M_n)_*\left(\frac{dz^2}{z^2}|_{M_n^{-1}(A_n'')} \right)}}{\norm{q_n}_{A_n}} \\
        &< \frac{\varepsilon}{1-\varepsilon} + \widetilde{\alpha}\left(\frac{\varepsilon}{1-\varepsilon} + 1 \right) \\
        &= \frac{\widetilde{\alpha} +\varepsilon}{1-\varepsilon}.
    \end{align*}
    This is less than 1 contradicting the cos-efficiency of $(q_n)$. Thus, $\lim_{n \to \infty} \frac{\norm{q_n}_{A_n''}}{\norm{q_n}_{A_n}} = 0$. Note that this claim implies that $r_n \to 0$, $R_n$ does not tend to infinity, and $R_n < \pi$.
    
    \underline{Claim 2:} We claim that for any fixed $R^* \in (0,\pi)$, 
    \[
    \lim_{n \to \infty} \frac{\norm{q_n}_{M_n(A(R^*,\pi))}}{\norm{q_n}_{A_n}} = 0.
    \]
    So take such an $R^* \in (0, \pi)$ and consider $r_n < R^*$. Let $\varepsilon > 0$ be sufficiently small and let $N = \frac{1}{\varepsilon}$. A key observation is that 
    \[
    \frac{\text{mod}(A(R^*,\pi))}{\text{mod}(A_n)} = \frac{\norm{\frac{dz^2}{z^2}}_{A(R^*,\pi)}}{\norm{\frac{dz^2}{z^2}}_{A(r_n,R_n)}} = \frac{\log\left(\frac{\pi}{R^*} \right)}{\log \left(\frac{R_n}{r_n}\right)} \to 0
    \]
    as $n \to \infty$ because $\frac{R_n}{r_n} \to \infty$ as specified by the limit model. So we may assume $\text{mod}(A_n) > N$. Using this and estimates as in the proof of the previous claim, it follows that for $n > N$, 
    \begin{align*}
        \frac{\norm{M_n^*q_n}_{A(R^*, \pi)}}{\norm{q_n}_{A_n}} &\leq \frac{\normc{q_n|_{M_n(A(R^*,\pi))} - c_n(M_n)_*\left(\frac{dz^2}{z^2}\big|_{A(R^*,\pi)}\right)}}{\norm{q_n}_{A_n}}
        + \frac{\norm{c_n (M_n)_*\left(\frac{dz^2}{z^2}\right)}_{M_n(A(R^*, \pi))}}{\norm{q_n}_{A_n}} \\
        & < \frac{\varepsilon}{1-\varepsilon} + \frac{\norm{c_n(M_n)_*\left( \frac{dz^2}{z^2} \right)}_{M_n(A(R^*, \pi))}}{\norm{c_n(M_n)_*\left( \frac{dz^2}{z^2} \right)}_{A_n}} \cdot \frac{\norm{c_n(M_n)_*\left( \frac{dz^2}{z^2} \right)}_{A_n}}{\norm{q_n}_{A_n}} \\
        &<\frac{\varepsilon}{1-\varepsilon} + \frac{\text{mod}(A(R^*,\pi))}{\text{mod}(A_n)}\left(1+\frac{\varepsilon}{1-\varepsilon}\right) \\
        &< \frac{\varepsilon}{1-\varepsilon} + \varepsilon \cdot \text{mod}(A(R^*,\pi)) \cdot \frac{1}{1-\varepsilon}
    \end{align*}
    and the claim follows. 
    
    Claim 1 and Claim 2 therefore imply that 
    \[
    \lim_{n \to \infty} \frac{\norm{q_n}_{M_n(A(r_n, R^*))}}{\norm{q_n}_{A_n}} = 1.
    \]
    Furthermore, there exists a sequence $R_n^* \in (r_n, R_n)$ with $R_n^* \to 0$ such that 
    \[
    \lim_{n \to \infty} \frac{\norm{q_n}_{A_n^*}}{\norm{q_n}_{A_n}} = 1
    \]
    where $A_n^* := \{z \in \mathbb{C} : r_n < |z-b_n| < R_n^*\}$. This concludes the proof of (1). 
    
    We next prove (2). By (1), it follows that $(q_n)$ has limit model $\frac{dz^2}{z^2}$ on annuli $A_n^*$. Therefore, the affine maps $\tilde{M}_n(z) := z+b_n$ satisfy
    \[
    \lim_{n \to \infty} \normc{(\tilde{M}_n)^*q_n - c_n \frac{dz^2}{z^2}\Big|_{\{r_n < |z| < R_n^*\}}} = 0, 
    \]
    where by our hypothesis, the $b_n$ do not converge to any critical points of cosine. Let $\tilde{\tilde{M}}_n(z) := R_n^*z$ and observe that the limit model definition can also be thought of as follows:
    \begin{align*}
        \normc{(\tilde{M}_n)^*q_n - c_n \frac{dz^2}{z^2}\Big|_{\{r_n < |z| < R_n^*\}}} 
        &= \normc{(\tilde{\tilde{M}}_n)^*(\tilde{M}_n)^*q_n - c_n (\tilde{\tilde{M}}_n)^*\left(\frac{dz^2}{z^2}\Big|_{\{r_n < |z| < R_n^*\}}\right)} \\
        &= \normc{(\tilde{M}_n \circ \tilde{\tilde{M}}_n)^*q_n - c_n \frac{dz^2}{z^2}\Big|_{\{r_n/R_n^* < |z| < 1\}}}.
    \end{align*}
    Define $M_n := \tilde{M}_n \circ \tilde{\tilde{M}}_n$ and define $\hat{M}_n(z) = -R_n^*\sin(b_n)z + \cos(b_n)$. Then, proceeding as in Proposition \ref{cos-eff-int-pushforwards}, let $S_n := (\hat{M_n})^{-1} \circ \cos \circ M_n$ and in the same way, $S_n(z) \to z$ as $n \to \infty$ uniformly on compact subsets of $\mathbb{C}$. Furthermore, 
    \begin{align*}
        &\quad\normc{(\hat{M_n})^*(\cos_*q_n) - c_n\frac{dz^2}{z^2}\Big|_{\{r_n/R_n^* < |z| < 1 \}}} \\
        &\leq \normc{M_n^*q_n - c_n\frac{dz^2}{z^2}\Big|_{\{r_n/R_n^* < |z| < 1 \}}} 
        + \normc{(S_n)_*\left( c_n\frac{dz^2}{z^2}\Big|_{\{r_n/R_n^* < |z| < 1 \}} \right) - c_n\frac{dz^2}{z^2}\Big|_{\{r_n/R_n^* < |z| < 1 \}}} 
    \end{align*}
    and it therefore follows that 
    \[
    \lim_{n \to \infty} \normc{\hat{M}_n^*(\cos_*q_n) - c_n\frac{dz^2}{z^2}\Big|_{\{r_n/R_n^* < |z| < 1 \}}} = 0. 
    \]
    To obtain annuli as in the statement of the proposition, define $\hat{\hat{M}}_n(z) := z + \cos(b_n)$ and  define $\hat{\tilde{M}}_n(z) := -R_n^*\sin(b_n)z$ so that $\hat{M}_n = \hat{\hat{M}}_n \circ \hat{\tilde{M}}_n$, and observe
    \[
        \quad\normc{\hat{M}_n^*(\cos_*q_n) - c_n\frac{dz^2}{z^2}\Big|_{\{r_n/R_n^* < |z| < 1 \}}} 
        = \normc{\hat{\hat{M}}_n^*(\cos_*q_n) - c_n\frac{dz^2}{z^2}\Big|_{\{|\sin(b_n)|r_n < |z| < |\sin(b_n)|R_n^* \}}}.
    \]
    \end{proof}

\subsection{Combining thick and thin cases to produce special annuli}

The majority of the work in \cite{expthursmaps} (that is focused on the characterization of exponential maps) is done to prove a key proposition that essentially says, under appropriate hypotheses, an exp-efficient sequence of quadratic differentials gives rise to special annuli that can be modified to extract the curves in a degenerate Levy cycle. This is exactly what we adapt in this section for cosine. The results below are analogs of Lemma 6.3 and Proposition 3.2 from \cite{expthursmaps}, respectively. We do not prove the former as its proof is unchanged. The latter has a proof that is similar in spirit. 

\begin{lemma}[Sequence of cos-efficient push-forwards]\label{seq_of_cos_eff_push_forwards}
    Let $\lambda_{n,1}, \ldots, \lambda_{n,m} \in \mathbb{C}^*$ where $n, m \in \mathbb{N}$. Suppose that $(q_n)$ is a sequence of measurable quadratic differentials with $0 < \normc{q_n} < \infty$ for all $n$ such that as $n \to \infty$, 
    \[
    \frac{\normc{(C_{\lambda_{n,m}} \circ \cdots \circ C_{\lambda_{n,1}})_*(q_n)}}{\normc{q_n}} \to 1. 
    \]
    Suppose $V_n$ are domains in $\mathbb{C}$ such that for all $n$, there exists $\alpha > 0$ such that 
    \[
    \frac{\norm{q_n}_{V_n}}{\normc{q_n}} \geq \alpha.  
    \]
    Then, for every $1 \leq s \leq m-1$, the sequence $((C_{\lambda_{n,s}} \circ \cdots \circ C_{\lambda_{n,1}})_*(q_n|_{V_n}))$ is cos-efficient. 
\end{lemma}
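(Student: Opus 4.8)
The plan is to reduce everything to two elementary facts about push-forwards of integrable quadratic differentials: (i) no holomorphic branched covering increases the $L^1$-mass, so $\normc{g_* p} \le \normc{p}$ for any such $g$, which is immediate from the direct-image formula together with the triangle inequality applied inside the sum over preimages; and (ii) the map $L_\lambda(z) := \lambda z$ is a conformal automorphism of $\mathbb{C}$, so push-forward by it preserves mass. Since $C_\lambda = L_\lambda \circ \cos$, fact (ii) gives $\normc{(C_\lambda)_* p} = \normc{\cos_* p}$ for every $\lambda \in \mathbb{C}^*$ and every $p$, so the scaling factors never affect any mass ratio. Write $g_{n,s} := C_{\lambda_{n,s}} \circ \cdots \circ C_{\lambda_{n,1}}$ (with $g_{n,0} := \mathrm{id}$) and set $\widetilde q_n := q_n|_{V_n}$. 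Unwinding the definition of cos-efficiency and applying (ii) with $\lambda = \lambda_{n,s+1}$, cos-efficiency of $\big((g_{n,s})_* \widetilde q_n\big)$ is exactly the assertion that $\normc{(g_{n,s+1})_* \widetilde q_n}/\normc{(g_{n,s})_* \widetilde q_n} \to 1$.

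First I would establish a global squeeze for the unrestricted sequence. By fact (i) the quantities $\normc{(g_{n,s})_* q_n}$ are non-increasing in $s$, so for every $0 \le s \le m$ we have $\normc{(g_{n,m})_* q_n} \le \normc{(g_{n,s})_* q_n} \le \normc{q_n}$. The hypothesis says the outer ratio tends to $1$, so for each fixed $s$ the middle ratio does as well; write $\normc{(g_{n,s})_* q_n} \ge (1-\varepsilon_n)\normc{q_n}$ with $\varepsilon_n \to 0$.

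The heart of the matter, and the step I expect to be the main obstacle, is transferring this squeeze from $q_n$ to the restriction $\widetilde q_n$, since the hypothesis controls only the full differential while the conclusion concerns $\widetilde q_n$. I would decompose $q_n = \widetilde q_n + q_n|_{\mathbb{C}\setminus V_n}$; as the two pieces are supported on essentially disjoint sets their masses add, $\normc{q_n} = \normc{\widetilde q_n} + \norm{q_n}_{\mathbb{C}\setminus V_n}$. Applying linearity of push-forward, the triangle inequality, and fact (i) to each piece gives
\[
(1-\varepsilon_n)\normc{q_n} \le \normc{(g_{n,s})_* q_n} \le \normc{(g_{n,s})_* \widetilde q_n} + \norm{q_n}_{\mathbb{C}\setminus V_n}.
\]
Substituting $\norm{q_n}_{\mathbb{C}\setminus V_n} = \normc{q_n} - \normc{\widetilde q_n}$ and rearranging yields $\normc{(g_{n,s})_* \widetilde q_n} \ge \normc{\widetilde q_n} - \varepsilon_n\normc{q_n}$. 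Here the hypothesis $\norm{q_n}_{V_n}/\normc{q_n} \ge \alpha$ is essential: it gives $\normc{q_n} \le \alpha^{-1}\normc{\widetilde q_n}$, so the error term $\varepsilon_n\normc{q_n}$ is dominated by $(\varepsilon_n/\alpha)\normc{\widetilde q_n}$, converting the estimate into a multiplicative one measured against $\normc{\widetilde q_n}$ rather than $\normc{q_n}$.

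Combining this lower bound with the upper bound $\normc{(g_{n,s})_* \widetilde q_n} \le \normc{\widetilde q_n}$ from fact (i) produces the restricted squeeze
\[
\left(1 - \tfrac{\varepsilon_n}{\alpha}\right)\normc{\widetilde q_n} \le \normc{(g_{n,s})_* \widetilde q_n} \le \normc{\widetilde q_n}
\]
for every $0 \le s \le m$, so that $\normc{(g_{n,s})_* \widetilde q_n}/\normc{\widetilde q_n} \to 1$; in particular $\normc{(g_{n,s})_* \widetilde q_n} > 0$ for large $n$, so the ratios below are defined. Finally, for $1 \le s \le m-1$ I would write the target quantity as a quotient of two such convergent ratios,
\[
\frac{\normc{(g_{n,s+1})_* \widetilde q_n}}{\normc{(g_{n,s})_* \widetilde q_n}} = \frac{\normc{(g_{n,s+1})_* \widetilde q_n}\big/\normc{\widetilde q_n}}{\normc{(g_{n,s})_* \widetilde q_n}\big/\normc{\widetilde q_n}} \longrightarrow 1,
\]
which by the reduction in the first paragraph is precisely cos-efficiency of $\big((g_{n,s})_* \widetilde q_n\big)$, as desired.
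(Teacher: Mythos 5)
Your proof is correct. The paper does not actually include its own proof of this lemma---it states that the argument is ``unchanged'' from Lemma~6.3 of \cite{expthursmaps}---and your argument (push-forward never increases $L^1$-mass, hence $\normc{(g_{n,s})_*q_n}$ is squeezed between $\normc{(g_{n,m})_*q_n}$ and $\normc{q_n}$; the reduction $\normc{(C_\lambda)_*p}=\normc{\cos_*p}$ via the automorphism $z\mapsto\lambda z$; and the transfer of the squeeze from $q_n$ to $q_n|_{V_n}$ using additivity of mass over $V_n$ and its complement together with the uniform lower bound $\alpha$) is precisely that standard argument, correctly reading the hypothesis on $\alpha$ as uniform in $n$.
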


\begin{proof}[Proof of Proposition \ref{key_prop}]
    This proof follows similarly to [\cite{expthursmaps}, Proposition 3.2]. Suppose (for contradiction) there exists a number of poles $N$, a number of iterates $m$, and a nonzero modulus $M$, such that for all $r \in (0,1)$ the property stated in the lemma does not hold. Therefore, there exists an integrable meromorphic quadratic differential $q$ with at most $N$ poles and there exist $\lambda_1, \ldots, \lambda_m \in \mathbb{C}^*$ so that $\normc{(C_{\lambda_m} \circ \cdots \circ C_{\lambda_1})_*q} > r\normc{q}$ and for all concentric disks $\tilde{D} \subset D$, one of (1), (2), and (3) should not hold. 

    By assumption, choose a sequence $(r_n)$ with $r_n \to 1$ as $n \to \infty$ and choose a corresponding sequence $(q_n)$ of integrable meromorphic quadratic differentials with at most $N$ poles on $\hat{\mathbb{C}}$. There exists corresponding $\lambda_{n,1}, \ldots, \lambda_{n,m} \in \mathbb{C}^*$ so that $\normc{(C_{\lambda_{n, m}} \circ \cdots \circ C_{\lambda_{n, 1}})_*q_n} > r_n\normc{q_n}$. To obtain a contradiction, it suffices to show that there exists a subsequence of $(q_n)$ from which one can extract concentric disks $\tilde{D}_n \subset D_n$ where all of (1), (2), and (3) hold.

    To begin, the Decomposition of Mass Theorem [\cite{expthursmaps}, Theorem 5.2] gives us existence of a subsequence called $(q_n)$ for ease of notation, $\ell \in \mathbb{N}$, and disjoint regions $V_n^{[j]} \subset \hat{\mathbb{C}}$ for $1 \leq j \leq \ell$ with one of the two possible limit models on these regions. Choose some $j$ and for $1 \leq i \leq m$, set
    \begin{align*}
        \hat{q}_n^{(0)} &:= q_n|_{V_n^{[j]}} \\
        \hat{q}_n^{(i)} &:= (C_{\lambda_{n,i}})_*(\hat{q}_n^{(i-1)})
    \end{align*}
    and note that Lemma \ref{seq_of_cos_eff_push_forwards} implies that the $(\hat{q}_n^{(i)})$ are cos-efficient. 

    We now break down into the two cases of the Decomposition of Mass Theorem [\cite{expthursmaps}, Theorem 5.2]. For the thick case, we suppose $(\hat{q}_n^{(0)})$ has limit model $\hat{q}$, where $\hat{q}$ is a meromorphic quadratic differential on $\hat{\mathbb{C}}$ with at most $N$ poles and $0 < \normc{\hat{q}} \leq 1$. We will compute the scalings for the $\hat{q}_n^{(i)}$ inductively. To begin, we know that there exists conformal automorphisms $M_n^{(0)} = a_n^{(0)}z+b_n^{(0)}$ (with $a_n^{(0)}, b_n^{(0)} \in \mathbb{C}$) such that $\lim_{n \to \infty} \normc{(M_n^{(0)})^*\hat{q}_n^{(0)} - \hat{q}} = 0$. Proposition \ref{cos-eff-int-pushforwards} implies that $a_n^{(0)} \to 0$ as $n \to \infty$. Choose $\tilde{R}$ so that $D_{\tilde{R}}(0)$ contains all the poles of $\hat{q}$ in $\mathbb{C}$. Then, the Decomposition of Mass Theorem [\cite{expthursmaps}, Theorem 5.2] guarantees that for each pole $w$ of $\hat{q}$, there exists a $w_n$ of $q_n$ such that $(M_n^{(0)})^{-1}(w_n) \to w$ as $n \to \infty$. So for large $n$, there are at least two poles of $\hat{q}_n^{(0)}$ in $M_n^{(0)}(D_{\tilde{R}}(0))$. Define
    \[
    D_n := M_n^{(0)}(D_{\tilde{R}e^{2\pi M}}(0)) \quad \text{and} \quad \tilde{D}_n := M_n^{(0)}(D_{\tilde{R}}(0)). 
    \]
    Then, $\tilde{D}_n$ contains at least two poles of $\hat{q}_n^{(0)}$ and the modulus of $D_n \setminus \tilde{D}_n$ is at least $M$, showing (1) and (2) hold. Now suppose $k\pi \notin C_{\lambda_{n,s}} \circ \cdots \circ C_{\lambda_{n,1}}(D_n)$ for all $k \in \mathbb{Z}$ and $0 \leq s \leq m$. This implies that the poles of $\hat{q}_n^{(0)}$ do not converge to $k\pi$ for $k \in \mathbb{Z}$ and thus, Proposition \ref{cos-eff-int-pushforwards} implies that $(\cos_*\hat{q}_n^{(0)})$ has limit model $\hat{q}$ with scalings $\hat{M}_n^{(0)}$. Let $M_n^{(1)}(z) := \lambda_{n,1}\hat{M}_n^{(0)}(z)$. Then,
    \begin{align*}
        \lim_{n \to \infty} \normc{(M_n^{(1)})^*\hat{q}_n^{(1)} - \hat{q}} &= \lim_{n \to \infty} \normc{((M_n^{(1)})^{-1})_*(C_{\lambda_{n,1}})_*\hat{q}_n^{(0)} - \hat{q}} \\
        &= \lim_{n \to \infty} \normc{(\hat{M}_n^{(0)})^*(\cos_*\hat{q}_n^{(0)}) - \hat{q}} \\
        &=0
    \end{align*}
    so that $(\hat{q}_n^{(1)})$ has limit model $\hat{q}$ with scalings $M_n^{(1)}(z) = a_n^{(1)}z+b_n^{(1)}$. Similarly, the poles of $\hat{q}_n^{(1)}$ do not converge to $k\pi$ for $k \in \mathbb{Z}$. Continue this process inductively by applying Proposition \ref{cos-eff-int-pushforwards} to obtain new scalings $\hat{M}_n^{(i)}$ for $(\cos_*\hat{q}_n^{(i)})$, defining $M_n^{(i+1)}(z) := \lambda_{n, i+1}\hat{M}_n^{(i)}(z)$, and carrying out the computation above. For $0 \leq i \leq m-1$, we then have that $\hat{q}_n^{(i+1)}$ has limit model $\hat{q}$ with scalings $M_n^{(i+1)}$ and defining $S_n^{(i+1)} := (M_n^{(i+1)})^{-1} \circ C_{\lambda_{n,i+1}} \circ M_n^{(i)}$, the proof of Proposition \ref{cos-eff-int-pushforwards} then gives us that $S_n^{(i+1)}(z) = (\hat{M}_n^{(i)})^{-1} \circ \cos \circ M_n^{(i)}(z) \to z$ uniformly on compact subsets of $\mathbb{C}$. In particular, this is true when $i=m-1$ so it follows that for any $R>0$, $C_{\lambda_{n, m}} \circ \cdots \circ C_{\lambda_{n, 1}}$ is injective on $M_n^{(0)}(D_R(0))$. None of (1), (2), and (3) fail yielding the contradiction for the thick case.

    For the thin case, suppose $\hat{q}_n^{(0)}$ does not have integrable limit model. By the proof of Proposition \ref{cos_eff_annular_pushfowards}, it suffices to assume that our limit model $\frac{dz^2}{z^2}$ is on annuli 
    \[A_n^{(0)} := \left\{z \in \mathbb{C} : \frac{r_n}{a_n^{(0)}} < |z-b_n^{(0)}| < 1 \right\}
    \]
    where $0 < r_n < a_n^{(0)}$, $b_n^{(0)} \in \mathbb{C}$, and $a_n^{(0)} \to 0$. The scaling here is $M_n^{(0)}(z) = a_n^{(0)}z + b_n^{(0)}$. Choose
    \[
    D_n := M_n^{(0)}(D_1(0)) \quad \text{and} \quad \tilde{D}_n := M_n^{(0)}(D_{r_n/a_n^{(0)}}(0)). 
    \]
    By the Decomposition of Mass Theorem [\cite{expthursmaps}, Theorem 5.2], there are at least two poles of $\hat{q}_n^{(0)}$ in the bounded component of $D_n \setminus \tilde{D}_n$ for large $n$. Therefore, (1) and (2) hold. For (3), suppose $k\pi \notin C_{\lambda_{n,s}} \circ \cdots \circ C_{\lambda_{n,1}}(D_n)$ for all $k \in \mathbb{Z}$ and $0 \leq s \leq m$. The scalings for $M_n^{(i)}$ corresponding to $\hat{q}_n^{(i)}$ for $1 \leq i \leq m$ are computed inductively with the idea being very similar to the thick case. The hypothesis implies that the poles of $\hat{q}_n^{(0)}$ do not converge to $k\pi$ for $k \in \mathbb{Z}$ and from Proposition \ref{cos_eff_annular_pushfowards}, it follows that $(\cos_*\hat{q}_n^{(0)})$ has limit model $\frac{dz^2}{z^2}$ with scalings $\hat{M}_n^{(0)}$. Let $M_n^{(1)}(z) := \lambda_{n,1}\hat{M}_n^{(0)}(z)$. Then,
    \begin{align*}
        \lim_{n \to \infty} \normc{(M_n^{(1)})^*\hat{q}_n^{(1)} - \frac{dz^2}{z^2}\Big|_{\{r_n/a_n^{(0)} < |z| < 1 \}} } 
        &= \lim_{n \to \infty} \normc{((M_n^{(1)})^{-1})_*(C_{\lambda_{n,1}})_*\hat{q}_n^{(0)} - \frac{dz^2}{z^2}\Big|_{\{r_n/a_n^{(0)} < |z| < 1 \}} } \\
        &= \lim_{n \to \infty} \normc{(\hat{M}_n^{(0)})^*(\cos_*\hat{q}_n^{(0)}) - \frac{dz^2}{z^2}\Big|_{\{r_n/a_n^{(0)} < |z| < 1 \}} } \\
        &=0
    \end{align*}
    so that $(\hat{q}_n^{(1)})$ has limit model $\hat{q}$ with scalings $M_n^{(1)} = a_n^{(1)}z+b_n^{(1)}$. By hypothesis, the poles of $\hat{q}_n^{(1)}$ do not converge to $k\pi$ for $k \in \mathbb{Z}$ and similar to the thick case, continue this process inductively by applying Proposition \ref{cos_eff_annular_pushfowards} to obtain new scalings $\hat{M}_n^{(i)}$ for $(\cos_*\hat{q}_n^{i})$, defining $M_n^{(i+1)}(z) := \lambda_{n, i+1}\hat{M}_n^{(i)}(z)$, and carrying out the computation above. For $0 \leq i \leq m-1$, it follows that $\hat{q}_n^{(i+1)}$ has limit model $\frac{dz^2}{z^2}$ with scalings $M_n^{(i+1)}$. Define $S_n^{(i+1)}$ analogously to the thick case, and note that the same logic carries through to show that for any $R>0$, $C_{\lambda_{n, m}} \circ \cdots \circ C_{\lambda_{n, 1}}$ is injective on $M_n^{(0)}(D_R(0))$ and hence on $D_n$. Furthermore, (1), (2), and (3) all hold for large $n$ providing the contradiction in the thin case. 
\end{proof}

\section{Generalizing to Other Transcendental Maps and Addressing Critical Points}
\label{generalizing_results_further_section}

It is noted in \cite{expthursmaps} that their results can be possibly used to prove characterizations for other transcendental maps. They also note that the results in Section 6 of their paper are the statements that really need to be generalized. This is exactly what has been done for $C_\lambda$ away from critical points (in Section \ref{adaptations_to_cosine_section}), but should work for other transcendental maps. For more general transcendental maps, the idea of efficiency must hold: efficient sequences of quadratic differentials must have their mass concentrated in very small disks that push forward with almost no loss of mass. 

For $C_\lambda$, the specific properties that were used were the trigonometric identities (such as in Proposition \ref{cos-eff-int-pushforwards} and Lemma \ref{cos-efficient-annular-dist-mass}). The concrete properties of cosine were crucial in understanding how the mass of a quadratic differential was distributed under pushing forward by cosine. For other entire transcendental maps, it may be possible to understand this by either using concrete properties the map may have or if not, by working locally (as efficient quadratic differentials should have their mass concentrated in small disks). In the general setting of entire transcendental maps and away from critical points, to use these techniques, one must be able to achieve the diagram in Figure \ref{change_of_coords_fig}. This diagram encodes the scalings of the thick case (Proposition \ref{cos-eff-int-pushforwards}) and the thin case (proof of Proposition \ref{cos_eff_annular_pushfowards})  coming from the limit models of sequences of quadratic differentials. 

On the other hand, the study of efficiency near the critical points of $C_\lambda$ is still yet to be understood. Due to examples such as Example \ref{example_eff_qd_with_cp}, it may be possible for mass concentrated in small disks near a critical point to push forward with very little loss of mass when there are poles of quadratic differentials that are cos-symmetric (or very close to being cos-symmetric). For example, near $0$, this gap comes from the property $C_\lambda$ has of being an even function. Understanding this will provide a way to either eliminate the \hyperref[condition:mass_condition]{mass condition} or, if not, weaken it in specific situations (for example, some configuration of poles being cos-symmetric). Remark \ref{some_q_do_not_occur} addresses some of these situations in the dynamical setting. Overall, understanding efficiency near the critical points of cosine may provide insight into other families of maps that are not locally affine in some neighbourhood of $\mathbb{C}$.

\bibliographystyle{alpha}

\bibliography{biblio}
\end{document}